\def\cl{\centerline}
\def\vs{\vspace*}
\def\S{\mathfrak{S}}
\def\L{\mathscr{W}}
\def\Z{\mathbb{Z}}
\def\N{\mathbb{N}}
\def\C{\mathbb{C}}
\def\ni{\noindent}
\numberwithin{equation}{section}
\newtheorem{theo}{Theorem}[section]
\newtheorem{coro}[theo]{Corollary}
\newtheorem{lemm}[theo]{Lemma}
\newtheorem{prop}[theo]{Proposition}
\newtheorem{case}{Case}
\newtheorem{exam}[theo]{Example}
\newtheorem{rema}[theo]{Remark}
\newtheorem{remark}[theo]{Remark}
\begin{document}
\begin{center}
\cl{\large\bf \vs{8pt} A class of non-weight modules  over  the super-BMS$_3$ algebra}
\cl{ Haibo Chen, Xiansheng Dai,  Ying Liu and Yucai Su}
\end{center}

{\small
\parskip .005 truein
\baselineskip 3pt \lineskip 3pt

\noindent{{\bf Abstract:}
In the present paper,  a class of non-weight modules over the super-BMS$_3$ algebras $\S^{\epsilon}$ ($\epsilon=0$ or $\frac{1}{2}$) are constructed.
Assume that   $\mathfrak{t}=\C L_0\oplus\C W_0\oplus\C G_0$
and $\mathfrak{T}=\C L_0\oplus\C W_0$ are  the  Cartan subalgebra (modulo center) of $\S^{0}$ and $\S^{\frac{1}{2}}$, respectively.  These modules over $\S^{0}$  when
restricted to the     $\mathfrak{t}$ are free of rank $1$, while
these modules over $\S^{\frac{1}{2}}$  when
restricted to the $\mathfrak{T}$   are free of rank $2$.
Then we determine the necessary and
sufficient conditions for these   modules being simple, as
well as determining the necessary and sufficient conditions
for two  $\S^{\epsilon}$-modules being isomorphic.
\vs{5pt}

\ni{\bf Key words:}
super-BMS$_3$ algebra, non-weight module, simple module.}

\ni{\it Mathematics Subject Classification (2020):} 17B10, 17B65, 17B68.}
\parskip .001 truein\baselineskip 6pt \lineskip 6pt
\section{Introduction}

Throughout this paper,  we denote by $\C,\C^*,\Z,\Z_+$ and  $\N$  the sets of complex numbers, nonzero complex numbers,  integers, nonnegative integers and positive integers, respectively.
   All vector superspaces
(resp. superalgebras, supermodules)   and
 spaces (resp. algebras, modules)    are considered to be over
the field  of complex numbers.

The   \textit{super-BMS$_3$ algebra}  $\mathfrak{S}^{\epsilon}$ ($\epsilon=0$ or $\frac{1}{2}$)
 is defined as an infinite-dimensional Lie superalgebra over $\C$ with basis
$\{L_m,W_m,G_{r},C_1,C_2
\mid m\in \Z, r\in \epsilon+\Z\}$ and  satisfying the following  non-trivial relations
 \begin{equation}\label{def1.1}
\aligned
&[L_m,L_n]= (n-m)L_{m+n}+\frac{n^{3}-n}{12}\delta_{m+n,0}C_1,\\&
[L_m,W_n]= (n-m)W_{m+n}+\frac{n^{3}-n}{12}\delta_{m+n,0}C_2,\\&
[L_m,G_r]= (r-\frac{m}{2})G_{m+r},\\&
 [G_r,G_s]= 2W_{r+s}+\frac{1}{3}(r^2-\frac{1}{4})\delta_{r+s,0}C_2,
\endaligned
\end{equation}
  where $m,n\in\Z,\epsilon=0,\frac{1}{2}, r,s\in\epsilon+\Z$ and $C_1,C_2$ are central elements.
 In order to study  the
  boundary
conditions for the    behaviour of   gauge fields, the super-BMS$_3$ algebra was introduced in \cite{BDMT1,BDMT2}.
Clearly, the $\Z_2$-grading of  $\mathfrak{S}^\epsilon$ is defined by   $\mathfrak{S}^\epsilon=\mathfrak{S}_{\bar 0}\oplus \mathfrak{S}_{\bar 1}^\epsilon$, where
$\mathfrak{S}_{\bar 0}=\{L_m,W_m,C_1,C_2\mid m\in \Z\}$ and $\mathfrak{S}_{\bar 1}^\epsilon=\{G_r\mid r\in \epsilon+\Z\}$.
  Notice that the subspace spanned by $\{L_m,C_1
\mid m\in \Z\}$ is
the Virasoro algebra, which is  closely related to the quantum physics, conformal field theory, vertex algebras, and so on (see, e.g., \cite{LL,GO,FMS}).
 The even part $\mathfrak{S}_{\bar 0}$ is exactly  the W-algebra   $W(2,2)$ $\mathcal{W}$, which  was introduced in \cite{ZD}  for the study of the classification of vertex
operator algebras generated by weight 2 vectors.
The structure theory  and representation theory   over the super-BMS$_3$ algebra were studied in \cite{DCL,DGL,DGL1,CS1,CS2}.
To illustrate the difference, $\mathfrak{S}^0$ and  $\mathfrak{S}^{\frac{1}{2}}$ are   respectively called  Ramond type super-BMS$_3$ algebra and
Neveu-Schwarz type super-BMS$_3$ algebra. It is clear that $\S^{\frac{1}{2}}$   is isomorphic to the subalgebra of $\S^{0}$ spanned by
  $\{L_{m},W_m\mid m\in2\Z\}\cup\{G_r\mid r\in2\Z+1\}\cup\{C_1,C_2\}$.

 As an   important part  of  representation theory, non-weight module has drawn a lot of attention of researchers.
In recent years, a class of    non-weight modules
on which the Cartan subalgebra acts freely were constructed and studied.
This kind of non-weight modules are called free $U(\mathfrak{h})$-modules by many authors,
where $U(\mathfrak{h})$ is the universal enveloping algebra of the Cartan subalgebra $\mathfrak{h}$.
It is worth mentioning that the free $U(\mathfrak{h})$-modules were first constructed  in  \cite{N1}  for the complex matrices algebra $\mathfrak{sl}_{n+1}$.
In addition, these modules were introduced by a very different approach  in \cite{TZ1}  at the same.
From then on, this kind of non-weight modules  were widely investigated, such as   the finite-dimensional simple Lie algebras \cite{N2},     the Virasoro algebra \cite{LZ,TZ}, the Heisenberg-Virasoro algebra \cite{CG,CS}, the algebra $\mathrm{Vir}(a,b)$ \cite{HCS}.
Furthermore,   the super version of  this kind of non-weight modules were also studied.  In \cite{CZ},    $U(\mathfrak{h})$-modules over the basic
Lie superalgebras were investigated, which   showed that $\mathfrak{osp}(1|2n)$ is the only basic Lie superalgebra that
admits such modules.  The free $U(\mathfrak{h})$-modules of rank $1$ over the $N=1$ Ramond  algebra and free
$U(\mathfrak{h})$-modules of rank $2$ over the $N=1$ Neveu-Schwarz algebra were  completely
classified in \cite{YYX1}.  The similar modules over
$N=2$ superconformal algebras were also investigated  in \cite{YYX2,CDL3}.  In this paper, we aim to
study the free $U(\mathfrak{t})$-modules of rank $1$ and $U(\mathfrak{T})$-modules of rank $2$  respectively over the Ramond
type and Neveu-Schwarz type of super-BMS$_3$ algebras.

The rest of this paper is organized as follows.
In section $2$, we    construct a class of non-weight modules over the super-BMS$_3$ algebra  $\S^{\epsilon}$. Then the simplicity and
isomorphism classes of these modules are determined.
In Section $3$,    the free $U(\mathfrak{t})$-modules of rank $1$ over the Ramond type  $\S^0$ are classified.
These modules can   be regarded
as free $U(\mathfrak{T})$-modules of rank $2$. In Section $4$,   the free $U(\mathfrak{T})$-modules of rank
$2$ over the Neveu-Schwarz type  $\S^{\frac{1}{2}}$  are classified. Furthermore, we present that the category of free $U(\mathfrak{t})$-modules
of rank $1$ over the  Ramond type $\S^0$  is equivalent to the category of
free $U(\mathfrak{T})$-modules of rank $2$ over the  Neveu-Schwarz type $\S^{\frac{1}{2}}$.

\section{Non-weight $\S^\epsilon$-modules $\Omega_{\S^{\epsilon}}(\lambda,\alpha,h)$}
In this section,     we define a class of non-weight modules $\Omega_{\S^{\epsilon}}(\lambda,\alpha,h)$ over $\S^\epsilon$.

Let $V=V_{\bar0}\oplus V_{\bar1}$ be  $\Z_2$-graded vector space.  Any element $v\in V_{\bar0}$ (resp. $v\in V_{\bar1}$)
is said to be even   (resp. odd). Define $|v|=0$ if
$v$ is even and $|v|=1$ if $v$ is odd. Elements in  $V_{\bar0}$ or  $V_{\bar1}$ are called homogeneous.
 Throughout this paper, all elements in superalgebras and modules are homogenous unless
specified.

Assume that $\mathcal{G}$ is a Lie superalgebra, a $\mathcal{G}$-module is a $\Z_2$-graded vector space $V$ together
with a bilinear map $\mathcal{G}\times V\rightarrow V$, denoted $(x,v)\mapsto xv$ such that
$$x(yv)-(-1)^{|x||y|}y(xv)=[x,y]v$$
and
$$\mathcal{G}_{\bar i} V_{\bar j}\subseteq  V_{\bar i+\bar j}$$
for all $x, y \in \mathcal{G}, v \in V $. Thus there is a parity-change functor $\Pi$ on the category of
$\mathcal{G}$-modules to itself.    We use $U(\mathcal{G})$ to denote
the universal enveloping algebra.
All modules for Lie superalgebras considered in this paper are $\Z_2$-graded and
all irreducible modules are non-trivial.

Fix any $\lambda\in\C^*,\alpha\in\C$ and $h(t)\in\C[t]$. For any $m\in\Z$,  define
$$h_m(t)=mh(t)-m(m-1)\alpha\frac{h(t)-h(\alpha)}{t-\alpha}.$$
Then
\begin{eqnarray}\label{211}
&&nh_n(t)-mh_m(t)+n(t-n\alpha)\frac{\partial}{\partial t}(h_m(t))-m(t-m\alpha)\frac{\partial}{\partial t}(h_n(t)) \nonumber
\\&=&(n-m)h_{m+n}(t)
\end{eqnarray}
for any $m,n\in\Z$.
 Let $\Omega_{\mathcal{W}}(\lambda,\alpha,h)=\C[t,s]$
as a vector space and  define the  action of $\mathcal{W}$ as follows:
\begin{eqnarray*}
&&L_m(f(t,s))=\lambda^m(s+h_m(t))f(t,s-m)-m\lambda^m(t-m\alpha)\frac{\partial}{\partial t}(f(t,s-m)),
\\&&
W_m(f(t,s))=\lambda^m(t-m\alpha)f(t,s-m),\ C_1(f(t,s))=C_2(f(t,s))=0,
\end{eqnarray*}
where $m,n\in\Z,\alpha\in\C,h(t)\in\C[t].$

Now we recall some know   results of  $\Omega_{\mathcal{W}}(\lambda,\alpha,h)$.
\begin{lemm}\label{le2.1}{\rm (see \cite{CG})}.
For $\lambda\in\C^*,\alpha\in\C,h(t)\in\C[t]$,
then
\begin{itemize}
\item[\rm(1)] $\Omega_{\mathcal{W}}(\lambda,\alpha,h)$ is a $\mathcal{W}$-module;
\item[\rm(2)] $\Omega_{\mathcal{W}}(\lambda,\alpha,h)$ is simple if and only if  $\alpha\neq0$.
\end{itemize}
\end{lemm}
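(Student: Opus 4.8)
The plan is to treat the two parts separately, since (1) is a direct verification while the substance of the lemma lies in the simplicity criterion (2).

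For part (1), I would check that the given operators satisfy the defining relations of $\mathcal{W}=\mathfrak{S}_{\bar0}$, with $C_1,C_2$ acting as $0$ so that no central corrections appear. The relation $[W_m,W_n]=0$ is immediate, since applying $W_m$ after $W_n$ produces $\lambda^{m+n}(t-m\alpha)(t-n\alpha)f(t,s-m-n)$, which is symmetric in $m,n$. For $[L_m,W_n]=(n-m)W_{m+n}$ I would compute $L_mW_nf$ and $W_nL_mf$ directly; the first-order (derivative) terms cancel, and the shift $s\mapsto s-n$ occurring inside $W_nL_m$ contributes exactly the factor $n$ needed, so that $[L_m,W_n]f=(n-m)\lambda^{m+n}(t-(m+n)\alpha)f(t,s-m-n)$, as required. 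The only genuinely computational relation is $[L_m,L_n]=(n-m)L_{m+n}$; here the polynomials $h_m(t)$ enter, and the functional identity \eqref{211} is precisely what forces the ``potential'' terms built from $h_m,h_n$ to recombine into $(n-m)h_{m+n}$ while the derivative parts reproduce $L_{m+n}$.

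For part (2), the first observation is that $L_0$ and $W_0$ act as multiplication by $s$ and $t$ respectively (note $h_0(t)=0$). Hence any nonzero $\mathcal{W}$-submodule $M\subseteq\C[t,s]$ is stable under multiplication by $t$ and $s$, i.e.\ $M$ is a nonzero ideal of $\C[t,s]$, and simplicity is equivalent to $1\in M$. If $\alpha=0$, then $W_mf=\lambda^m t\,f(t,s-m)$ and $h_m(t)=mh(t)$, so both $L_m$ and $W_m$ map $t\,\C[t,s]$ into itself; thus $t\,\C[t,s]$ is a proper nonzero submodule and the module is reducible.

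For the direction $\alpha\neq0\Rightarrow$ simple, I would argue via the common zero locus of the ideal $M$. Suppose $M\neq\C[t,s]$ and pick a common zero $(t_0,s_0)\in\C^2$. Applying $W_m$ gives $\lambda^m(t_0-m\alpha)g(t_0,s_0-m)=0$ for every $g\in M$; since $\alpha\neq0$, the equation $t_0=m\alpha$ excludes at most one integer $m$, so $(t_0,s_0-m)$ is a common zero of $M$ for infinitely many $m$. A fixed $f\in M$ then vanishes at infinitely many points of the line $t=t_0$, forcing $f(t_0,s)\equiv0$ and hence $(t-t_0)\mid f$. I would then bootstrap: assuming $(t-t_0)^N\mid f$ for all $f\in M$ (with $N\geq1$), write $f=(t-t_0)^Ng$, expand $L_mf$, and note that modulo $(t-t_0)^N$ the only surviving contribution is $-m\lambda^m(t-m\alpha)N\,g(t,s-m)$; choosing $m\neq0$ with $m\alpha\neq t_0$ and using that $t-m\alpha$ is coprime to $t-t_0$ yields $(t-t_0)\mid g$, i.e.\ $(t-t_0)^{N+1}\mid f$. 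Iterating shows every element of $M$ is divisible by arbitrarily high powers of $t-t_0$, which is impossible for a nonzero polynomial; hence $M=0$, a contradiction, so $M=\C[t,s]$.

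The main obstacle is this last direction: organizing the operators $L_m,W_m$ with $m\neq0$ so as to rule out even a single common zero of a putative proper submodule. The crucial quantitative input is that $\alpha\neq0$ makes the factors $t-m\alpha$ avoid any fixed value $t_0$ for all but one $m$, which is exactly what powers the divisibility induction; once this is set up, the manipulations with $h_m(t)$ are routine.
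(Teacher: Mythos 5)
Your proof is correct, but a point of context first: this paper does not actually prove Lemma~\ref{le2.1} --- it is imported from \cite{CG} --- so the only in-paper material to compare with is the parallel super computation in Proposition~\ref{pro2.21} and the $\alpha=0$ submodules $\pi_i$ in Theorem~\ref{th2.3}. Your part (1) coincides with that computational verification: the identity \eqref{211} is exactly the input that recombines the $h_m$-terms in $[L_m,L_n]$, and the second-order terms cancel, as you say. (Your description of $[L_m,W_n]$ is slightly compressed: besides the shift $s\mapsto s-n$ contributing $n(t-n\alpha)$, one also needs the Leibniz term $-m\lambda^{m+n}(t-m\alpha)f$ from differentiating $t-n\alpha$ to produce the summand $-m(t-m\alpha)$; your displayed commutator is nonetheless the correct one.) Your $\alpha=0$ direction is the same idea as the paper's, exhibiting $t\,\C[t,s]$ as a proper nonzero submodule, matching the submodules $\pi_i$ used in Theorem~\ref{th2.3}. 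For $\alpha\neq0$ you depart genuinely from the route in \cite{CG}: there one takes a nonzero element of a submodule and reduces its degrees by explicit linear combinations of the operators $W_m$ and $L_m$ until reaching $1$; you instead note that $h_0=0$ makes $L_0,W_0$ act as multiplication by $s$ and $t$, so any submodule $M$ is an ideal, invoke the weak Nullstellensatz to produce a common zero $(t_0,s_0)$ of a putative proper $M$, propagate the zero along the line $t=t_0$ via $W_m$ (here $\alpha\neq0$ guarantees $t_0\neq m\alpha$ for all but at most one $m$), and then run a divisibility induction on powers of $t-t_0$ driven by the first-order term $-mN\lambda^m(t-m\alpha)(t-t_0)^{N-1}g(t,s-m)$ of $L_m$, contradicting the fact that a nonzero polynomial cannot be divisible by every power of $t-t_0$. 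I checked the two delicate steps --- the base case ($f(t_0,s)$ vanishes at the infinitely many points $s_0-m$, hence identically, so $(t-t_0)\mid f$) and the inductive step (cancelling $(t-t_0)^{N-1}$ in the domain $\C[t,s]$ and using coprimality of $t-t_0$ and $t-m\alpha$ for $m\neq0$ with $m\alpha\neq t_0$) --- and both are sound. As for what each approach buys: yours is shorter and conceptually transparent, localizing the failure of simplicity at a single point of $\C^2$, at the cost of invoking the Nullstellensatz; the argument of \cite{CG} is longer but elementary and constructive, explicitly manufacturing $1\in M$.
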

\begin{theo}\label{th2.1}{\rm (see \cite{CG})}.
Any free $U(\C L_0\oplus \C W_0)$-module of rank $1$ over   $\mathcal{W}$ is isomorphic to some  $\Omega_{\mathcal{W}}(\lambda,\alpha,h)$  for $\lambda\in\C^*,\alpha\in\C,h(t)\in\C[t].$
\end{theo}
Let $V=\C[t^2,s]\oplus t\C[t^2,s]$. Then $V$ is a $\Z_2$-graded vector space with $V_{\bar 0}=\C[t^2,s]$ and
$V_{\bar 1}=t\C[t^2,s]$.  A precise construction of an $\S^0$-module structure on
$V$ will be presented.

\begin{prop}\label{pro2.21}
For $\lambda\in \C^*,\alpha\in\C,h(t)\in\C[t],f(t^2,s)\in\C[t^2,s]$ and $tf(t^2,s)\in t\C[t^2,s]$,
we define the action
of Ramond type super-BMS$_3$ algebra $\S^{0}$ on $V$  as follows
\begin{eqnarray}\label{2.22}
L_mf(t^2,s)&=&\lambda^m(s+h_m(t^2))f(t^2,s-m) \nonumber
\\&&-m\lambda^m(t^2-m\alpha)\frac{\partial}{\partial t^2}(f(t^2,s-m)),
\\L_{m}tf(t^2,s)&=&\lambda^m(s-\frac{m}{2}+h_m(t^2))tf(t^2,s-m) \nonumber
\\&& \label{2.33} -m\lambda^m(t^2-m\alpha)t\frac{\partial}{\partial t^2}f(t^2,s-m),
\\ \label{2.44} W_mf(t^2,s)&=&\lambda^m(t^2-m\alpha)f(t^2,s-m),
\\ \label{2.55}  W_{m}tf(t^2,s)&=&\lambda^m(t^2-m\alpha)tf(t^2,s-m),
\\ \label{2.66} G_mf(t^2,s)&=&\lambda^mtf(t^2,s-m),
\\ \label{2.77} G_mtf(t^2,s)&=&\lambda^m(t^2-2m\alpha)f(t^2,s-m),
\\ \label{2.88} C_i(f(t^2,s))&=&C_i(tf(t^2,s))=0
\end{eqnarray}
for $m\in\Z,i=1,2.$
 Then $V$ is an $\S^{0}$-module under the action of \eqref{2.22}-\eqref{2.88}, which
  is
free of rank $1$ as a module over $U(\mathfrak{t})$ and   denoted by $\Omega_{\S^{0}}(\lambda,\alpha,h)$.
\end{prop}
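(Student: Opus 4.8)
The plan is to verify that the formulas \eqref{2.22}--\eqref{2.88} respect the defining relations \eqref{def1.1} of $\S^0$ on homogeneous elements, and then to exhibit a free generator for $V$ over $U(\mathfrak{h})$. The grading condition $\mathfrak{S}_{\bar i}V_{\bar j}\subseteq V_{\bar i+\bar j}$ is immediate, since $L_m,W_m$ preserve and $G_m$ exchanges the two summands of $V$. Because $C_1$ and $C_2$ act as zero by \eqref{2.88}, every central term of \eqref{def1.1} drops out, so it suffices to check the four bracket families $[L_m,L_n]=(n-m)L_{m+n}$, $[L_m,W_n]=(n-m)W_{m+n}$, $[L_m,G_n]=(n-\tfrac m2)G_{m+n}$ and $\{G_m,G_n\}=2W_{m+n}$ (the last an anticommutator, as $G_m,G_n$ are odd) on each of $V_{\bar0}=\C[t^2,s]$ and $V_{\bar1}=t\C[t^2,s]$.

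For the two purely even relations I would avoid a direct computation by reducing to Lemma \ref{le2.1}. Writing $u=t^2$, the formulas \eqref{2.22} and \eqref{2.44} for $L_m,W_m$ on $V_{\bar0}$ are literally those of $\Omega_{\mathcal{W}}(\lambda,\alpha,h)$ in the variable $u$, so $[L_m,L_n]$ and $[L_m,W_n]$ hold on $V_{\bar0}$ by Lemma \ref{le2.1}(1) (where identity \eqref{211} is the essential ingredient). On $V_{\bar1}$ the only change is the extra shift $-\tfrac m2$ in \eqref{2.33}, which is absorbed by replacing $h$ with $h-\tfrac12$: a direct check from the definition of $h_m$ gives $h_m-\tfrac m2=(h-\tfrac12)_m$, while $W_m$ is insensitive to $h$. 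Thus the assignment $tf(t^2,s)\mapsto f(u,s)$ identifies $V_{\bar1}$, as a module over the even part $\mathcal{W}$, with $\Omega_{\mathcal{W}}(\lambda,\alpha,h-\tfrac12)$, and Lemma \ref{le2.1}(1) again yields the two even relations on $V_{\bar1}$.

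It remains to check the relations involving $G$. The anticommutator $\{G_m,G_n\}=2W_{m+n}$ is easiest: applying \eqref{2.66} and \eqref{2.77} twice, one finds on both parities the coefficient $(t^2-2m\alpha)+(t^2-2n\alpha)=2\bigl(t^2-(m+n)\alpha\bigr)$, which is exactly $2W_{m+n}$ by \eqref{2.44}--\eqref{2.55}. For $[L_m,G_n]=(n-\tfrac m2)G_{m+n}$ I would compute $L_mG_n-G_nL_m$ on even and on odd elements separately; in both cases the $h_m(t^2)$-terms and the $\partial_{t^2}$-terms cancel, leaving the constant shifts that reproduce the coefficient $n-\tfrac m2$. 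On $V_{\bar1}$ one must in addition match the terms linear in $\alpha$: here the factor $2$ distinguishing the coefficient $t^2-2m\alpha$ of $G_m$ in \eqref{2.77} from the coefficient $t^2-m\alpha$ of $L_m,W_m$ is essential, and the quadratic-in-indices $\alpha$-terms collapse to $(n-\tfrac m2)\bigl(t^2-2(m+n)\alpha\bigr)$, matching $(n-\tfrac m2)G_{m+n}$ via \eqref{2.77}. I expect this $\alpha$-term bookkeeping on $V_{\bar1}$ to be the only genuinely delicate point of the module-axiom check.

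Finally, for freeness over $U(\mathfrak{h})$, I would read off from \eqref{2.22}--\eqref{2.77} that, since $h_0=0$, the generators of $\mathfrak{h}=\C L_0\oplus\C W_0\oplus\C G_0$ act on $V$ by multiplication: $L_0$ by $s$, $W_0$ by $t^2$, and $G_0$ by $t$. Since $\{G_0,G_0\}=2W_0$ gives $G_0^2=W_0$ in $U(\mathfrak{h})$, the PBW theorem shows $U(\mathfrak{h})\cong\C[L_0,G_0]$, a polynomial algebra in two commuting generators (with $W_0=G_0^2$). Under the vector-space identification $V=\C[t^2,s]\oplus t\C[t^2,s]=\C[t,s]$, these operators realize $U(\mathfrak{h})$ acting on $\C[t,s]$ by multiplication with $L_0\mapsto s$ and $G_0\mapsto t$; hence $V=U(\mathfrak{h})\cdot 1$ and $\{L_0^aG_0^b\cdot1=s^at^b\mid a,b\in\Z_+\}$ is a basis, so $V=\Omega_{\S^{0}}(\lambda,\alpha,h)$ is free of rank $1$ over $U(\mathfrak{h})$ with cyclic generator $1$.
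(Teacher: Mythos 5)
Your proposal is correct, and on the hardest step it takes a genuinely different route from the paper. The paper, after invoking Lemma \ref{le2.1}(1) for the even relations on $V_{\bar 0}$ (as you do), verifies $[L_m,L_n]$ and $[L_m,W_n]$ on $V_{\bar 1}$ by a multi-page direct expansion of $L_mL_ntf$, $L_mW_ntf$, $W_nL_mtf$, ultimately resolved through the identity \eqref{211}; your observation that $(h-\tfrac12)_m=h_m-\tfrac m2$ (immediate from the definition of $h_m$, since the difference quotient $\frac{h(t)-h(\alpha)}{t-\alpha}$ is unchanged by a constant shift of $h$) identifies $V_{\bar 1}$, via $tf(t^2,s)\mapsto f(u,s)$, with $\Omega_{\mathcal{W}}(\lambda,\alpha,h-\tfrac12)$, so that the entire even-part verification on $V_{\bar 1}$ is again absorbed into Lemma \ref{le2.1}(1). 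What this buys is a conceptual explanation of the shift $-\tfrac m2$ in \eqref{2.33} and the elimination of the longest computation in the paper; what the paper's brute-force route buys is that identity \eqref{211} is displayed in exactly the form reused later (e.g.\ in Section 4). Your treatment of the $G$-relations and of freeness matches the paper's in substance, and you correctly isolate the only delicate cancellation, namely the quadratic-in-indices $\alpha$-terms in $[L_m,G_n]$ on $V_{\bar 1}$, which indeed collapse as $(n+\tfrac m2)(t^2-2n\alpha)-m(t^2-m\alpha)=(n-\tfrac m2)\bigl(t^2-2(m+n)\alpha\bigr)$. (Your freeness argument is also slightly more detailed than the paper's, which essentially asserts it; note $h_0=0$ and the PBW basis $L_0^aW_0^bG_0^{\delta}$, $\delta\in\{0,1\}$, as you say.) One small inaccuracy: it does not quite ``suffice to check the four bracket families'' you list, since the vanishing bracket $[W_m,G_n]=0$ is also part of the defining relations of $\S^0$ and is not implied by the others; the paper checks it explicitly at the end of its proof. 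In your framework it is a one-line verification ($W_mG_n$ and $G_nW_m$ both produce the factor $\lambda^{m+n}(t^2-m\alpha)$ with the same shift $s-m-n$, on either parity), so this is an omission of bookkeeping rather than of substance, but it should be recorded; the relation $[W_m,W_n]=0$, by contrast, is already covered by your two identifications with $\Omega_{\mathcal{W}}$-modules.
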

\begin{proof}
By the trivial action of \eqref{2.88}, we omit the central elements in the following proof. From Lemma \ref{le2.1} (1),  we obtain
\begin{eqnarray*}
&&[L_m, L_n]f(t^2,s)=L_mL_nf(t^2,s)-L_nL_mf(t^2,s),
\\&&[L_m, W_n]f(t^2,s)=L_mW_nf(t^2,s)-W_nL_mf(t^2,s),
\\&&0=(W_mW_n-W_nW_m)f(t^2,s),
\end{eqnarray*}
where $m,n \in\Z$.
For   $m,n\in\Z$, it follows from  \eqref{2.33} and  \eqref{2.55}  that  we have
\begin{eqnarray}\label{2.99}
L_mL_ntf(t^2,s)&=&L_m\Big(\lambda^n(s-\frac{n}{2}+h_n(t^2))tf(t^2,s-n)\nonumber
\\&&-n\lambda^n(t^2-n\alpha)t\frac{\partial}{\partial t^2}f(t^2,s-n)\Big)\nonumber
\\&=&\lambda^{m+n}\Big(s-\frac{m}{2}+h_m(t^2)\Big)t\Big(s-m-\frac{n}{2}+h_n(t^2)\Big)f(t^2,s-m-n)\nonumber
\\&&-m\lambda^{m+n}(t^2-m\alpha)tf(t^2,s-m-n)\frac{\partial}{\partial t^2}h_n(t^2)\nonumber
\\&&-m\lambda^{m+n}(t^2-m\alpha)t\big(s-m-\frac{n}{2}+h_n(t^2)\big)\frac{\partial}{\partial t^2}f(t^2,s-m-n)\nonumber
\end{eqnarray}
\begin{eqnarray}
&&-n\lambda^{m+n}\Big(s-\frac{m}{2}+h_m(t^2)\Big)t(t^2-n\alpha)\frac{\partial}{\partial t^2}f(t^2,s-m-n)\nonumber
\\&&+mn\lambda^{m+n}(t^2-m\alpha)t\frac{\partial}{\partial t^2}f(t^2,s-m-n)\nonumber
\\&&+mn\lambda^{m+n}(t^2-m\alpha)t(t^2-n\alpha)\frac{\partial^2}{\partial^2 (t^2)}f(t^2,s-m-n),
\\L_mW_ntf(t^2,s)&=&L_m\Big(\lambda^n(t^2-n\alpha)tf(t^2,s-n)\Big)\nonumber
\\&=&\lambda^{m+n}(s-\frac{m}{2}+h_m(t^2))t(t^2-n\alpha)f(t^2,s-m-n) \nonumber
\\&&-m\lambda^{m+n}(t^2-m\alpha)tf(t^2,s-m-n)\nonumber
\\&& \label{299} -m\lambda^{m+n}(t^2-m\alpha)t(t^2-n\alpha)\frac{\partial}{\partial t^2}f(t^2,s-m-n)
\end{eqnarray}
and
\begin{eqnarray}
W_nL_mtf(t^2,s)&=&W_n\Big(\lambda^m(s-\frac{m}{2}+h_m(t^2))tf(t^2,s-m)\nonumber
\\&&-m\lambda^m(t^2-m\alpha)t\frac{\partial}{\partial t^2}f(t^2,s-m)\Big)\nonumber
\\&=&\lambda^{m+n}(t^2-n\alpha)(s-n-\frac{m}{2}+h_m(t^2))tf(t^2,s-m-n)\nonumber
\\&& \label{29900} -m\lambda^{m+n}(t^2-n\alpha)(t^2-m\alpha)t\frac{\partial}{\partial t^2}f(t^2,s-m-n).
\end{eqnarray}
Then for  $m,n\in\Z$, by   \eqref{211} and \eqref{2.99},   we compute that
\begin{eqnarray*}
&&[L_m,L_n]tf(t^2,s)
\\&=&\lambda^{m+n}\Big(s-\frac{m}{2}+h_m(t^2)\Big)t\Big(s-m-\frac{n}{2}+h_n(t^2)\Big)f(t^2,s-m-n)
\\&&-m\lambda^{m+n}(t^2-m\alpha)tf(t^2,s-m-n)\frac{\partial}{\partial t^2}h_n(t^2)
\\&&-m\lambda^{m+n}(t^2-m\alpha)t\big(s-m-\frac{n}{2}+h_n(t^2)\big)\frac{\partial}{\partial t^2}f(t^2,s-m-n)
\\&&-n\lambda^{m+n}\Big(s-\frac{m}{2}+h_m(t^2)\Big)t(t^2-n\alpha)\frac{\partial}{\partial t^2}f(t^2,s-m-n)
\\&&+mn\lambda^{m+n}(t^2-m\alpha)t\frac{\partial}{\partial t^2}f(t^2,s-m-n)
\\&&+mn\lambda^{m+n}(t^2-m\alpha)t(t^2-n\alpha)\frac{\partial^2}{\partial^2 (t^2)}f(t^2,s-m-n)
\end{eqnarray*}
\begin{eqnarray*}
&&-\lambda^{m+n}\Big(s-\frac{n}{2}+h_n(t^2)\Big)t\Big(s-n-\frac{m}{2}+h_m(t^2)\Big)f(t^2,s-m-n)
\\&&+n\lambda^{m+n}(t^2-n\alpha)tf(t^2,s-m-n)\frac{\partial}{\partial t^2}h_m(t^2)
\\&&+n\lambda^{m+n}(t^2-n\alpha)t\big(s-n-\frac{m}{2}+h_m(t^2)\big)\frac{\partial}{\partial t^2}f(t^2,s-m-n)
\\&&+m\lambda^{m+n}\Big(s-\frac{n}{2}+h_n(t^2)\Big)t(t^2-m\alpha)\frac{\partial}{\partial t^2}f(t^2,s-m-n)
\\&&-mn\lambda^{m+n}(t^2-n\alpha)t\frac{\partial}{\partial t^2}f(t^2,s-m-n)
\\&&-mn\lambda^{m+n}(t^2-n\alpha)t(t^2-m\alpha)\frac{\partial^2}{\partial^2 (t^2)}f(t^2,s-m-n),
\\&=&\lambda^{m+n}tf(t^2,s-m-n)\Big(n(s-\frac{n}{2})-m(s-\frac{m}{2})+nh_n(t^2)-mh_m(t^2)\Big)
\\&&+\lambda^{m+n}tf(t^2,s-m-n)\Big(n(t^2-n\alpha)\frac{\partial}{\partial t^2}h_m(t^2)-m(t^2-m\alpha)\frac{\partial}{\partial t^2}h_n(t^2)\Big)
\\&&+m^2\lambda^{m+n}(t^2-m\alpha)t\frac{\partial}{\partial t^2}f(t^2,s-m-n)
\\&&-n^2\lambda^{m+n}(t^2-n\alpha)t\frac{\partial}{\partial t^2}f(t^2,s-m-n)
\\&&+mn\lambda^{m+n}(n-m)\alpha t\frac{\partial}{\partial t^2}f(t^2,s-m-n)
\\&=&(n-m)\lambda^{m+n}(s-\frac{m+n}{2}+h_{m+n}(t^2))tf(t^2,s-m-n)
\\&&-(n-m)(m+n)\lambda^{m+n}(t^2-(m+n)\alpha)t\frac{\partial}{\partial t^2}f(t^2,s-m-n)
\\&=&(n-m)L_{m+n}tf(t^2,s).
\end{eqnarray*}
For   $m,n\in\Z$,  from \eqref{299} and \eqref{29900},   one can check that
\begin{eqnarray*}
[L_m,W_n]tf(t^2,s)&=&(n-m)\lambda^{m+n}(t^2-(m+n)\alpha) tf(t^2,s-m-n)
\\&=&(n-m)W_{m+n}tf(t^2,s).
\end{eqnarray*}
For   $m,n\in\Z$, by the similar computation, we conclude that
\begin{eqnarray*}
&&[L_m,G_n]f(t^2,s)
\\&=&(n-\frac{m}{2})\lambda^{m+n} tf(t^2,s-m-n)
\\&=&(n-\frac{m}{2})G_{m+n}f(t^2,s),
\end{eqnarray*}
\begin{eqnarray*}
&&[L_m,G_n]tf(t^2,s)
\\&=&(n-\frac{m}{2})\lambda^{m+r}(t^2-2(m+n)\alpha) f(t^2,s-m-n)
\\&=&(n-\frac{m}{2})G_{m+n}tf(t^2,s),
\\&&[G_m,G_n]f(t^2,s)
\\&=&\lambda^{m+n}(t^2-2m\alpha) tf(t^2,s-m-n)+\lambda^{m+n}(t^2-2n\alpha) tf(t^2,s-m-n)
\\&=&2W_{m+n}f(t^2,s),
\\&&
[G_m,G_n]tf(t^2,s)
\\&=&\lambda^{m+n}(t^2-2n\alpha) tf(t^2,s-m-n)+\lambda^{m+n}(t^2-2m\alpha) tf(t^2,s-m-n)
\\&=&2W_{m+n}tf(t^2,s).
\end{eqnarray*}
Finally, for   $m,n\in\Z$, it is easy to get that
\begin{eqnarray*}
[W_m,G_n]f(t^2,s)=[W_m,G_n]tf(t^2,s)=0.
\end{eqnarray*}
 This completes the proof.
\end{proof}

\begin{theo}\label{th2.3}
Let $\lambda\in\C^*,\alpha\in\C,h(t)\in\C[t]$.
Then   $\Omega_{\S^{0}}(\lambda,\alpha,h)$ is simple if and only if  $\alpha\neq0$.
\end{theo}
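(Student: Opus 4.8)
The plan is to reduce everything to Lemma \ref{le2.1} by restricting the module to the even subalgebra $\mathcal{W}=\mathfrak{S}_{\bar 0}$ (the $W(2,2)$ algebra) and analyzing the two homogeneous components separately. The crucial first observation is that each component is itself a module of the type in Lemma \ref{le2.1}. Writing $u=t^2$ and comparing \eqref{2.22} and \eqref{2.44} with the defining action of $\mathcal{W}$, the even part $V_{\bar 0}=\C[t^2,s]$ is isomorphic to $\Omega_{\mathcal{W}}(\lambda,\alpha,h)$. For the odd part, the identification $tf(t^2,s)\mapsto f(u,s)$ together with \eqref{2.33} and \eqref{2.55} shows that $V_{\bar 1}=t\C[t^2,s]$ carries the action of $\Omega_{\mathcal{W}}(\lambda,\alpha,h-\tfrac12)$: the only discrepancy is the extra summand $-\tfrac{m}{2}$ in the $L_m$-coefficient, and since replacing $h$ by $h+c$ changes $h_m$ into $h_m+mc$, taking $c=-\tfrac12$ gives $(h-\tfrac12)_m(u)=h_m(u)-\tfrac{m}{2}$, exactly as required. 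By Lemma \ref{le2.1}(2), both $V_{\bar 0}$ and $V_{\bar 1}$ are simple $\mathcal{W}$-modules when $\alpha\neq 0$, and neither is simple when $\alpha=0$.

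For the \emph{if} direction, assume $\alpha\neq 0$ and let $N=N_{\bar 0}\oplus N_{\bar 1}$ be a nonzero $\Z_2$-graded submodule. Since $\mathcal{W}=\mathfrak{S}_{\bar 0}$ preserves the grading, $N_{\bar 0}$ and $N_{\bar 1}$ are $\mathcal{W}$-submodules of $V_{\bar 0}$ and $V_{\bar 1}$ respectively, so by the simplicity just established each is either $0$ or the whole component. As $N\neq 0$, at least one component is full, and I would bridge to the other using the odd generators. From \eqref{2.66}, $G_0(1)=t\in V_{\bar 1}$, so $N_{\bar 0}=V_{\bar 0}$ forces $N_{\bar 1}\neq 0$, hence $N_{\bar 1}=V_{\bar 1}$; conversely, from \eqref{2.77}, $G_0(t)=t^2\neq 0$ in $V_{\bar 0}$, so $N_{\bar 1}=V_{\bar 1}$ forces $N_{\bar 0}\neq 0$, hence $N_{\bar 0}=V_{\bar 0}$. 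In either case $N=V$, so $\Omega_{\S^{0}}(\lambda,\alpha,h)$ is simple.

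For the \emph{only if} direction, assume $\alpha=0$. Here I would exhibit a proper nonzero submodule directly, namely $M=t^2\C[t^2,s]\oplus t\C[t^2,s]$. It is nonzero and proper, since $1\in V_{\bar 0}$ does not lie in $M$. Invariance is a routine check from \eqref{2.22}--\eqref{2.77} with $\alpha=0$: $L_m$ and $W_m$ preserve both $t^2\C[t^2,s]$ and $t\C[t^2,s]$, while $G_m$ sends $t^2\C[t^2,s]$ into $t^3\C[t^2,s]\subseteq t\C[t^2,s]$ and sends $t\C[t^2,s]$ into $t^2\C[t^2,s]$ (because, with $\alpha=0$, $G_m(tf)=\lambda^m t^2 f(t^2,s-m)$). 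Thus the module is not simple when $\alpha=0$.

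The verifications of the two $\mathcal{W}$-module identifications and of the invariance of $M$ are routine re-readings of the action formulas; the only structural point — and the crux of the argument — is the bridging step in the \emph{if} direction. Simplicity of the two homogeneous components does not by itself yield simplicity of the super-module, so one must confirm that the odd action connects $V_{\bar 0}$ and $V_{\bar 1}$ nontrivially in both directions, which is precisely what $G_0(1)=t$ and $G_0(t)=t^2$ provide. Note that these bridging maps are nonzero for every value of $\alpha$; the hypothesis $\alpha\neq 0$ enters solely through the simplicity of $V_{\bar 0}$ and $V_{\bar 1}$ in Lemma \ref{le2.1}, which is exactly why the argument collapses (and $M$ survives) when $\alpha=0$.
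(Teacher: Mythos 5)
Your proof is correct and follows essentially the same route as the paper: restrict to the even subalgebra $\mathcal{W}$, use Lemma \ref{le2.1}(2) for the case $\alpha\neq0$ with $G_0$ bridging the two parities, and exhibit the proper submodule $t^2\C[t^2,s]\oplus t\C[t^2,s]$ (the paper's $\pi_1$, one of its family $\pi_i$) when $\alpha=0$. If anything, your version is more complete than the paper's terse Case 2, since you explicitly identify $V_{\bar 1}\cong\Omega_{\mathcal{W}}(\lambda,\alpha,h-\frac{1}{2})$ via $(h-\frac{1}{2})_m=h_m-\frac{m}{2}$ and carry out the bridging in both directions, handling the possibility $P_{\bar 0}=0$ that the paper leaves implicit.
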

\begin{proof}
To prove this, we   consider two cases  in the following.
\begin{case}
$\alpha=0.$
\end{case}
For any $i\in\Z_+$, denote
$\pi_i:=(t^{2})^i\C[t^2,s]\oplus t\C[t^2,s]$. It is clear that for any $i\in\Z_+$, $\pi_i$ is a submodule
of $\Omega_{\S^{0}}(\lambda,0,h)$ for $\lambda\in\C^*$ and $h(t)\in\C[t]$.

\begin{case}
$\alpha\neq0.$
\end{case}
Let $P=P_{\bar0}\oplus P_{\bar 1}$ be a nonzero submodule of
$\Omega_{\S^{0}}(\lambda,\alpha,h)$.
It follows from    Lemma \ref{le2.1} (2)  that   $P_{\bar0}=\Omega_{\S^{0}}(\lambda,\alpha,h)_{\bar0}$
is a simple  $\S_{\bar0}$-module.
Taking any $f(t^2,s)\in P_{\bar0}=\C[t^2,s]$, we have $G_0f(t^2,s)=tf(t^2,s)\in P_{\bar1}$
 by \eqref{2.66}.
  Consequently, $P=\Omega_{\S^{0}}(\lambda,\alpha,h)$, which implies that $\Omega_{\S^{0}}(\lambda,\alpha,h)$
  is a
simple  $\S^{0}$-module.
\end{proof}
\begin{rema}
For $i\in\Z_+$, the quotient module
$$\bar \pi_i:=(t^{2})^i\C[t^2,s]\oplus t\C[t^2,s]/(t^{2})^{i+1}\C[t^2,s]\oplus t\C[t^2,s]$$
  is simple if and only if  $i\neq h(0)$.
  In fact, for any nonzero  $(t^2)^ig(s) \in\bar \pi_i$,
   we have
  \begin{eqnarray*}
L_m((t^2)^ig(s))&\equiv&\lambda^m(s+mh(0))(t^2)^ig(s-m)-\lambda^mmi(t^2)^ig(s-m)
\\&\equiv&\lambda^m(s+m(h(0)-i))(t^2)^ig(s-m)
 \quad \mathrm{mod}\  (t^{2})^{i+1}\C[t^2,s]\oplus t\C[t^2,s]
\end{eqnarray*}
 and
 $$W_m((t^2)^ig(s))=0.$$
\end{rema}
\begin{theo}\label{th2.4}
 Let $\lambda,\mu\in\C^*,\alpha,\beta\in\C,h(t),g(t)\in\C[t]$.
Then
\begin{itemize}
\item[\rm(i)]    $\Pi\big(\Omega_{\S^{0}}(\lambda,\alpha,h)\big)\ncong\Omega_{\S^{0}}(\lambda^\prime,\alpha^\prime,h^\prime)$ for any $\lambda^\prime,\alpha^\prime\in\C,h^\prime(t)\in\C[t]$;

\item[\rm(ii)]     $\Omega_{\S^{0}}(\lambda,\alpha,h)\cong\Omega_{\S^{0}}(\mu,\beta,g)$ if and only if $\lambda=\mu,\alpha=\beta$ and $h=g$.
\end{itemize}
\end{theo}
\begin{proof}
(i) Let
$$
\psi:\Pi\big(\Omega_{\S^{0}}(\lambda,\alpha,h)\big)\longrightarrow\Omega_{\S^{0}}(\lambda^\prime,\alpha^\prime,h^\prime)
$$ be an isomorphism. Let  $\mathbf{1}$ and  $\mathbf{1}^\prime$  be the
generators of the free $U(\mathfrak{t})$-modules
$\Pi\big(\Omega_{\S^{0}}(\lambda,\alpha,h)\big)$ and $\Omega_{\S^{0}}(\lambda^\prime,\alpha^\prime,h^\prime),$   respectively.
We see that there exists some $c,d\in\C^*$ such that $\psi(\mathbf{1})=c t\mathbf{1}^\prime$
and $\psi(t\mathbf{1})=d\mathbf{1}^\prime$.
For $r\in\Z$, we have $$G_r\psi(\mathbf{1})=G_r\big(c t\mathbf{1}^\prime\big)
=c\lambda^r(t^2-2r\alpha^\prime)\mathbf{1}^\prime
\ \mathrm{and} \  \psi(G_r\mathbf{1})=d\lambda^r\mathbf{1}^\prime.$$  Clearly, $G_r\psi(\mathbf{1})\neq\psi(G_r\mathbf{1})$,  which yields a contradiction.

(ii)
The sufficiency is obvious. Suppose $\Omega_{\S^{0}}(\lambda,\alpha,h)\cong\Omega_{\S^{0}}(\mu,\beta,g)$ as $\S^{0}$-module. Then
 $\Omega_{\S^{0}}(\lambda,\alpha,h)_{\bar 0}\cong\Omega_{\S^{0}}(\mu,\beta,g)_{\bar 0}$ as $\S_{\bar 0}$-modules.  By Theorem 3.2 in \cite{CG}, we have  $\lambda=\mu,\alpha=\beta$ and $h=g$.
\end{proof}

Assume that  $M=\C[t,s]\oplus\C[y,x]$. Then $M$ is a $\Z_2$-graded vector space with $M_{\bar 0}=\C[t,s]$ and $M_{\bar 1}=\C[y,x]$.
The  $\S^{\frac{1}{2}}$-module structure on $M$    will be characterized as follows.
\begin{prop}
For $\lambda\in \C^*,\alpha\in\C,h(t)\in\C[t],h(y)\in\C[y],f(t,s)\in\C[t,s]$ and $k(y,x)\in\C[y,x]$,
we define the action
of Neveu-Schwarz type  super-BMS$_3$ algebra $\S^{\frac{1}{2}}$ on $M$  as follows
\begin{eqnarray}\label{3.22}
L_mf(t,s)&=&\lambda^m(s+h_m(t))f(t,s-m) \nonumber
\\&&-m\lambda^m(t-m\alpha)\frac{\partial}{\partial t}(f(t,s-m)),
\\L_{m}k(y,x)&=&\lambda^m(x-\frac{m}{2}+h_m(y))k(y,x-m) \nonumber
\\&&-m\lambda^m(y-m\alpha)\frac{\partial}{\partial y}k(y,x-m),
\\W_mf(t,s)&=&\lambda^m(t-m\alpha)f(t,s-m),
\\W_{m}k(y,x)&=&\lambda^m(y-m\alpha)k(y,x-m),
\\G_rf(t,s)&=&\lambda^{r-\frac{1}{2}}f(y,x-r),
\\G_rk(y,x)&=&\lambda^{r+\frac{1}{2}}(t-2r\alpha)k(t,s-r),
\\ \label{3.88} C_i(f(t,s))&=&C_i(k(y,x))=0,
\end{eqnarray}
where $m\in\Z,i=1,2,r\in\frac{1}{2}+\Z$.
 Then $M$ is an $\S^{\frac{1}{2}}$-module under the action of \eqref{3.22}-\eqref{3.88}, which
  is a
free of rank $2$ as a module over $U(\mathfrak{T})$ and   denoted by  $\Omega_{\S^{\frac{1}{2}}}(\lambda,\alpha,h)$.
\end{prop}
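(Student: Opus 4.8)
The plan is to mirror the verification carried out for Proposition \ref{pro2.21}: since $C_1$ and $C_2$ act as zero, I can discard every central term and simply check that the prescribed action respects each nontrivial bracket of $\S^{\frac{1}{2}}$ on both homogeneous components $M_{\bar0}=\C[t,s]$ and $M_{\bar1}=\C[y,x]$, after which the freeness of rank $2$ is immediate. The purely even relations $[L_m,L_n]=(n-m)L_{m+n}$, $[L_m,W_n]=(n-m)W_{m+n}$ and $[W_m,W_n]=0$ split into two independent checks. On $M_{\bar0}$ the operators $L_m,W_m$ act exactly by the formulas defining $\Omega_{\mathcal{W}}(\lambda,\alpha,h)$, so these relations hold there by Lemma \ref{le2.1}(1). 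On $M_{\bar1}$ the operator $L_m$ carries the extra shift $-\frac{m}{2}$ in the eigenvalue slot; this is precisely the shape of the $tf$-action in \eqref{2.33}, and the same computation as in Proposition \ref{pro2.21} --- driven by the identity \eqref{211} --- shows that the even relations again hold, the $-\frac{m}{2}$ terms recombining to produce the $-\frac{m+n}{2}$ of $L_{m+n}$.

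The substantive part is the odd sector. Here it is convenient to record the convention encoded in the definition: $G_r$ sends $\C[t,s]$ to $\C[y,x]$ by $(t,s)\mapsto(y,x-r)$ together with the scalar $\lambda^{r-\frac{1}{2}}$, and sends $\C[y,x]$ to $\C[t,s]$ by $(y,x)\mapsto(t,s-r)$ together with the factor $\lambda^{r+\frac{1}{2}}(t-2r\alpha)$; in particular the two $\lambda$-exponents add to $\lambda^{r+s}$ whenever two $G$'s are composed, and the polynomial $h_m$ never enters a $G$-action. For $[G_r,G_s]=2W_{r+s}$, recalling that this is the \emph{anti}commutator $G_rG_s+G_sG_r$, I would compose the two maps on $f(t,s)$ and on $k(y,x)$ separately; the two orders contribute $(t-2r\alpha)$ and $(t-2s\alpha)$ (resp.\ $(y-2r\alpha)$ and $(y-2s\alpha)$), whose sum is $2\bigl(t-(r+s)\alpha\bigr)$ (resp.\ $2\bigl(y-(r+s)\alpha\bigr)$), matching $2W_{r+s}$; the case $r+s=0$ is consistent because the omitted central term $\frac{1}{6}r^2C_2$ acts as $0$. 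The relation $[W_m,G_r]=0$ follows at once, since $W_m$ only multiplies by $(t-m\alpha)$ or $(y-m\alpha)$ and commutes with the variable swap. For $[L_m,G_r]=(r-\frac{m}{2})G_{m+r}$ I would again test on $f$ and on $k$; in both cases the derivative contributions of $L_m$ cancel between $L_mG_r$ and $G_rL_m$, leaving only the scalar slots to be compared.

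The freeness statement is then quick. Since $[L_0,W_0]=0$, the subalgebra $\mathfrak{H}=\C L_0\oplus\C W_0$ is abelian and $U(\mathfrak{H})=\C[L_0,W_0]$. Because $h_0=0$, the formulas give $L_0f=sf$ and $W_0f=tf$ on $M_{\bar0}$, and $L_0k=xk$ and $W_0k=yk$ on $M_{\bar1}$; that is, $L_0$ and $W_0$ act as multiplication by the two polynomial generators on each summand. Hence $M_{\bar0}=U(\mathfrak{H})\cdot 1$ and $M_{\bar1}=U(\mathfrak{H})\cdot 1$ are each free of rank $1$ generated by the constant $1$, and $M$ is free of rank $2$ over $U(\mathfrak{H})$ with basis $\{1_{\bar0},1_{\bar1}\}$, justifying the notation $\Omega_{\S^{\frac{1}{2}}}(\lambda,\alpha,h)$.

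The main obstacle I anticipate is the verification of $[L_m,G_r]=(r-\frac{m}{2})G_{m+r}$ on $M_{\bar1}$. Acting first by $G_r$ produces the factor $(t-2r\alpha)$, so when $L_m$ is applied, its derivative term $-m(t-m\alpha)\frac{\partial}{\partial t}$ hits this product and, by the Leibniz rule, splits into a piece involving $\frac{\partial}{\partial t}k$ (which must cancel against the corresponding piece of $G_rL_m$) and an extra multiplication piece $-m(t-m\alpha)k$. Collecting the surviving scalar terms, one must check the polynomial identity $(r+\frac{m}{2})(t-2r\alpha)-m(t-m\alpha)=(r-\frac{m}{2})\bigl(t-2(m+r)\alpha\bigr)$, which is exactly the coefficient of $G_{m+r}$; getting this bookkeeping --- together with the interplay of the $-\frac{m}{2}$ shift and the $\alpha$-dependent terms --- to close is the only place where genuine care is needed.
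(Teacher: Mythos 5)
Your proposal is correct and follows essentially the same route as the paper: the even relations are delegated to Lemma \ref{le2.1}(1) and the computation of Proposition \ref{pro2.21}, while the brackets $[L_m,G_r]$, $[G_r,G_p]$ (as anticommutator) and $[W_m,G_r]=0$ are verified directly on each homogeneous component, with your anticipated identity $(r+\frac{m}{2})(t-2r\alpha)-m(t-m\alpha)=(r-\frac{m}{2})\bigl(t-2(m+r)\alpha\bigr)$ being exactly the check the paper's computation performs. Your explicit verification of freeness of rank $2$ via $h_0=0$ is a small addition the paper leaves implicit, but it does not change the argument.
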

\begin{proof}
By the similar calculations appeared in Lemma \ref{le2.1} (1) and Proposition \ref{pro2.21},  we have
\begin{eqnarray*}
&&[L_m, L_n]f(t,s)=L_mL_nf(t,s)-L_nL_mf(t,s),
\\&&[L_m, L_n]k(y,x)=L_mL_nk(y,x)-L_nL_mk(y,x),
\\&&[L_m, W_n]f(t,s)=L_mW_nf(t,s)-W_nL_mf(t,s),
\\&&[L_m, W_n]k(y,x)=L_mW_nk(y,x)-W_nL_mk(y,x),
\\&&(W_mW_n-W_nW_m)f(t,s)=(W_mW_n-W_nW_m)k(y,x)=0,
\end{eqnarray*}
where $m,n \in\Z$.
For any $m\in\Z,r\in\frac{1}{2}+\Z$, we deduce that
\begin{eqnarray*}
&&[L_m,G_r]f(t,s)
\\&=&L_m\Big(\lambda^{r-\frac{1}{2}}f(y,x-r)\Big)
\\&&-G_r\Big(\lambda^m(s+h_m(t))f(t,s-m) \nonumber
-m\lambda^m(t-m\alpha)\frac{\partial}{\partial t}(f(t,s-m))\Big)
\\&=&(r-\frac{m}{2})\lambda^{m+r-\frac{1}{2}} f(y,x-m-r)
\\&=&(r-\frac{m}{2})G_{m+r}f(t,s).
\end{eqnarray*}
For any $m\in\Z,r,q\in\frac{1}{2}+\Z$,   we obtain that
\begin{eqnarray*}
&&[L_m,G_r]k(y,x)
\\&=&(r-\frac{m}{2})\lambda^{m+r+\frac{1}{2}}(t-2(m+r)\alpha) f(t,s-m-r)
\\&=&(r-\frac{m}{2})G_{m+r}k(y,x),
\\&&
[G_r,G_p]f(t,s)
\\&=&2\lambda^{r+p}(t-(r+p)\alpha)f(t,s-r-p)
\\&=&2W_{r+p}f(t,s),
\\&&
[G_r,G_p]k(y,x)
\\&=&2\lambda^{r+p}(y-(r+p)\alpha)k(y,x-r-p)
\\&=&2W_{r+p}k(y,x).
\end{eqnarray*}
At last, for any $m\in\Z,r\in\frac{1}{2}+\Z$, it is easy to get
\begin{eqnarray*}
[W_m,G_r]f(t,s)=[W_m,G_r]k(y,x)=0.
\end{eqnarray*}
This proposition holds.
\end{proof}

Assume that $\sigma: \S^{\frac{1}{2}}\rightarrow\S^{0}$ is a linear map defined by
 \begin{eqnarray*}
L_m&\mapsto&\frac{1}{2}L_{2m}+\frac{1}{16}\delta_{m,0}C_1,
\\W_m&\mapsto&\frac{1}{2}W_{2m}+\frac{1}{16}\delta_{m,0}C_2,
\\G_r&\mapsto&\frac{1}{\sqrt{2}}G_{2r},
\\C_1&\mapsto& 2C_1,
\\C_2&\mapsto& 2C_2
\end{eqnarray*}
for $m\in\Z, r\in\frac{1}{2}+\Z$. It is easy to show that $\sigma$ is an injective Lie superalgebra
homomorphism. Clearly,  $\S^{\frac{1}{2}}$ can be regarded as a subalgebra of $\S^{0}$. Then $\Omega_{\S^{0}}(\lambda,\alpha,h)$
  can be regarded as  an $\S^{\frac{1}{2}}$-module.

\begin{prop}\label{prop2.6}
$\Omega_{\S^{0}}(\lambda,\alpha,h)$ is a simple  $\S^{\frac{1}{2}}$-module if and only if it is a simple $\S^{0}$-module.
\end{prop}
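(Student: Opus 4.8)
The plan is to reduce the statement to the condition $\alpha\neq0$ by means of Theorem~\ref{th2.3} and to prove the two implications separately. First I would settle the easy implication. Since $\sigma$ realizes $\S^{\frac12}$ as a subalgebra of $\S^{0}$ and the $\S^{\frac12}$-action on $\Omega_{\S^{0}}(\lambda,\alpha,h)$ is merely the restriction of the $\S^{0}$-action along $\sigma$, every $\S^{0}$-submodule is automatically an $\S^{\frac12}$-submodule. Hence the lattice of $\S^{0}$-submodules is contained in the lattice of $\S^{\frac12}$-submodules, so $\S^{\frac12}$-simplicity forces $\S^{0}$-simplicity; by Theorem~\ref{th2.3} it also forces $\alpha\neq0$.

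For the converse I would assume $\alpha\neq0$ (equivalently, $\S^{0}$-simplicity) and imitate the proof of Theorem~\ref{th2.3}. The crucial preliminary step is to understand the even part of $\S^{\frac12}$, which $\sigma$ embeds into $\S^{0}$ through $L_m\mapsto\frac12 L_{2m}$ and $W_m\mapsto\frac12 W_{2m}$, acting on the homogeneous components $V_{\bar0}=\C[t^2,s]$ and $V_{\bar1}=t\C[t^2,s]$. Writing $u=t^2$ and rescaling $s=2v$, a direct computation shows that on $V_{\bar0}$ the generator $L_m$ acts by $f\mapsto\mu^m(v+g_m(u))f(u,v-m)-m\mu^m(u-m\beta)\partial_u f(u,v-m)$ with $\mu=\lambda^2$, $\beta=2\alpha$ and $g(t)=h(t)-\alpha\frac{h(t)-h(\alpha)}{t-\alpha}$. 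The only identity one must check is $\frac12 h_{2m}(u)=g_m(u)$, and this follows from the two polynomial identities $g(2\alpha)=h(\alpha)$ and $g(u)-g(2\alpha)=(u-2\alpha)\frac{h(u)-h(\alpha)}{u-\alpha}$. Thus, up to an inessential factor $\frac12$ on the $W$-action (which leaves the submodule lattice unchanged, since $W$-invariance of a subspace is unaffected by nonzero rescaling), $V_{\bar0}$ is exactly $\Omega_{\mathcal{W}}(\lambda^2,2\alpha,g)$, and the same computation (absorbing the extra $-\frac{m}{2}$ by replacing $g$ with $g-\frac12$) identifies $V_{\bar1}$ with $\Omega_{\mathcal{W}}(\lambda^2,2\alpha,g-\frac12)$. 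As $2\alpha\neq0$, Lemma~\ref{le2.1}(2) then guarantees that $V_{\bar0}$ and $V_{\bar1}$ are both simple modules over the even part of $\S^{\frac12}$.

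With this in hand I would close exactly as in Theorem~\ref{th2.3}. Let $P=P_{\bar0}\oplus P_{\bar1}$ be a nonzero $\S^{\frac12}$-submodule; as the even part preserves the $\Z_2$-grading, $P_{\bar0}$ and $P_{\bar1}$ are even-part submodules. If $P_{\bar1}\neq0$ then $P_{\bar1}=V_{\bar1}$, and applying $G_r=\frac1{\sqrt2}G_{2r}$ for $r=\pm\frac12$ via \eqref{2.77}, which sends $tf(t^2,s)$ to a nonzero multiple of $(t^2-4r\alpha)f(t^2,s-2r)$, yields both $(t^2-2\alpha)\phi$ and $(t^2+2\alpha)\phi$ in $P_{\bar0}$ for every $\phi\in\C[t^2,s]$ after suitable choices of preimages; since $\alpha\neq0$, their difference is a nonzero multiple of $\phi$, so $P_{\bar0}=V_{\bar0}$. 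Hence any nonzero $P$ has $P_{\bar0}=V_{\bar0}$, and applying $G_{1/2}=\frac1{\sqrt2}G_1$ via \eqref{2.66}, which maps $f(t^2,s)$ to a nonzero multiple of $tf(t^2,s-1)$ and so sends $V_{\bar0}$ bijectively onto $V_{\bar1}$, gives $P_{\bar1}=V_{\bar1}$. Therefore $P=V$, and $\Omega_{\S^{0}}(\lambda,\alpha,h)$ is $\S^{\frac12}$-simple. I expect the main obstacle to be the even-part bookkeeping—verifying $\frac12 h_{2m}=g_m$ and confirming that the factor $\frac12$ on the $W$-action is harmless—after which the odd generators glue $V_{\bar0}$ and $V_{\bar1}$ together just as before.
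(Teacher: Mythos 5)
Your proof is correct and follows essentially the same route as the paper: the easy direction via restriction along the embedding $\sigma$, and the converse by showing the even part is a simple module over $\sigma(\mathcal{W})$ (using $\alpha\neq0$ and Lemma~\ref{le2.1}(2)) and then gluing $V_{\bar0}$ and $V_{\bar1}$ with the odd operators $\sigma(G_r)$. The only difference is one of detail: you make explicit the identification of $V_{\bar0}$ and $V_{\bar1}$ with $\Omega_{\mathcal{W}}(\lambda^2,2\alpha,g)$ and $\Omega_{\mathcal{W}}(\lambda^2,2\alpha,g-\frac12)$ (including the verification $g_m=\frac12 h_{2m}$ and the harmless rescaling of the $W$-action), and you treat the case $P_{\bar0}=0$ explicitly, all of which the paper's compressed proof leaves implicit.
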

\begin{proof}
We only prove sufficiency. From
\begin{eqnarray*}
&&\sigma(L_{m})f(t^2,s)=\frac{1}{2}\lambda^{2m}(s+h_{2m}(t^2))f(t^2,s-2m)
-m\lambda^{2m}(t^2-2m\alpha)\frac{\partial}{\partial t^2}(f(t^2,s-2m)),
\\&& \sigma(W_{m})f(t^2,s)=\frac{1}{2}\lambda^{2m}(t^2-2m\alpha)f(t^2,s-2m)
\end{eqnarray*}
and $\alpha\neq0$, we see that $\Omega_{\S^{0}}(\lambda,\alpha,h)_{\bar 0}$ is a simple  $\sigma(\mathcal{W})$-module.
Then by $\sigma(G_r)f(t^2,s)=\frac{1}{\sqrt{2}}\lambda^{2r}tf(t^2,s-2r)$, one has $t\mathbf{1}\in\Omega_{\S^{0}}(\lambda,\alpha,h)_{\bar 1}$, which     yields that $\Omega_{\S^{0}}(\lambda,\alpha,h)$ is a simple $\S^{\frac{1}{2}}$-module.
\end{proof}

\begin{prop}\label{prop2.7}
Let  $\lambda\in\C^*,\alpha\in\C$ and $g(t),h(t)\in\C[t]$.  Suppose that for any $m\in\Z$  $g_m(\frac{1}{2}t)=\frac{1}{2}h_{2m}(t)$. Then as $\S^{\frac{1}{2}}$-modules,
we have  $\Omega_{\S^{\frac{1}{2}}}(\lambda,\alpha,g)\cong\Omega_{\S^{0}}(\sqrt{\lambda},\alpha,h)$.
\end{prop}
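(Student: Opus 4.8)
The plan is to exhibit an explicit parity-preserving linear isomorphism $\phi\colon \Omega_{\S^{\frac12}}(\lambda,\alpha,g)\to\Omega_{\S^{0}}(\sqrt\lambda,\alpha,h)$ and to check that it intertwines the action of every generator of $\S^{\frac12}$. The first preparatory step is to make the $\S^{\frac12}$-action on $\Omega_{\S^{0}}(\sqrt\lambda,\alpha,h)$ fully explicit: since $\S^{\frac12}$ acts through the embedding $\sigma$, the generators act as $\frac12 L_{2m}$, $\frac12 W_{2m}$ and $\frac{1}{\sqrt2}G_{2r}$ on $V=\C[t^2,s]\oplus t\C[t^2,s]$. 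Substituting $\sqrt\lambda$ for the spectral parameter and using $(\sqrt\lambda)^{2m}=\lambda^m$ together with $(\sqrt\lambda)^{2r}=\lambda^{r}$ (where $2r$ is an odd integer for $r\in\frac12+\Z$) rewrites each of \eqref{2.22}--\eqref{2.77} with the shift $s\mapsto s-2m$ (resp.\ $s-2r$), the polynomial $h_{2m}$, and an overall factor $\frac12$ (resp.\ $\frac1{\sqrt2}$). Keeping this simplified list at hand is what makes the subsequent comparison mechanical.

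Next I would define $\phi$ through the substitutions $t\mapsto\frac12 t^2,\ s\mapsto\frac12 s$, namely $\phi(f(t,s))=f(\tfrac12 t^2,\tfrac12 s)$ on the even part $M_{\bar0}=\C[t,s]$, and $\phi(k(y,x))=\frac{\sqrt\lambda}{\sqrt2}\,t\,k(\tfrac12 t^2,\tfrac12 s)$ on the odd part $M_{\bar1}=\C[y,x]$. On monomials this sends $t^is^j\mapsto(\tfrac12)^{i+j}(t^2)^is^j$ and $y^ix^j\mapsto\frac{\sqrt\lambda}{\sqrt2}(\tfrac12)^{i+j}t(t^2)^is^j$, so $\phi$ is a parity-preserving linear bijection onto $V_{\bar0}=\C[t^2,s]$ and $V_{\bar1}=t\C[t^2,s]$ respectively; it therefore remains only to verify the intertwining relations.

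For $W_m$ the substitution turns $t-m\alpha$ into $\tfrac12 t^2-m\alpha=\tfrac12(t^2-2m\alpha)$ and the shift $s-m$ into $\tfrac12(s-2m)$, which reproduces exactly the factor $\tfrac12$ and the shift $s-2m$ of the $\sigma$-transported action, so $\phi(W_m\cdot)=W_m\phi(\cdot)$ on both parities. For $L_m$ the same two facts handle the transport term, the hypothesis $g_m(\tfrac12 t)=\tfrac12 h_{2m}(t)$ (evaluated at $t\mapsto t^2$, giving $g_m(\tfrac12 t^2)=\tfrac12 h_{2m}(t^2)$) handles the $g_m$-term, and the chain-rule relation $\frac{\partial}{\partial(t^2)}\big(k(\tfrac12 t^2,\tfrac12 s)\big)=\tfrac12(\partial_y k)(\tfrac12 t^2,\tfrac12 s)$ converts the $\partial/\partial t$ (resp.\ $\partial/\partial y$) derivative into $\partial/\partial(t^2)$ with the factor $2$ that cancels the $\tfrac12$ coming from $\tfrac12 t^2-m\alpha$. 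On the odd part the explicit $-\tfrac m2$ in the action of $L_m$ produces, after factoring out $\tfrac12$, the constant $-m$ inside $s-m+h_{2m}(t^2)$, while $x\mapsto\tfrac12 s$ turns $x-m$ into $\tfrac12(s-2m)$ and so reproduces the argument shift $s-2m$. Hence $\phi$ intertwines $L_m$ as well.

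The only delicate point is the odd generator $G_r$, because $\phi$ carries a normalization constant and $G_r$ exchanges the two parities. If the constant on $M_{\bar1}$ is momentarily set to $1$, a direct computation gives $\phi(G_r f)=\frac{\sqrt2}{\sqrt\lambda}\,G_r\phi(f)$ for $f\in M_{\bar0}$ and $\phi(G_r k)=\frac{\sqrt\lambda}{\sqrt2}\,G_r\phi(k)$ for $k\in M_{\bar1}$; the two discrepancy factors are reciprocal and independent of $r$, which is exactly where the identity $\lambda^{r-\frac12}\big/\big(\tfrac1{\sqrt2}\lambda^{r}\big)=\sqrt2/\sqrt\lambda$ enters. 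Reinstating the constant $\frac{\sqrt\lambda}{\sqrt2}$ on $M_{\bar1}$ simultaneously rescales both relations to genuine equalities, which is precisely why this value is forced. Once $L_m$, $W_m$ and $G_r$ are all checked, $\phi$ is an $\S^{\frac12}$-module isomorphism. I expect the bookkeeping in the $G_r$-step -- tracking the half-integer powers of $\lambda$ through $\sigma$ and confirming the reciprocity of the two constants -- to be the main obstacle, whereas the $L_m$ and $W_m$ verifications are routine once the hypothesis relating $g$ and $h$ is in place.
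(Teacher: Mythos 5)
Your proposal is correct and takes essentially the same route as the paper: the paper's proof defines exactly your map, $\Psi(f(t,s))=f(\tfrac{1}{2}t^2,\tfrac{1}{2}s)$ and $\Psi(k(y,x))=\sqrt{\tfrac{\lambda}{2}}\,t\,k(\tfrac{1}{2}t^2,\tfrac{1}{2}s)$, and verifies the same intertwining identities $\Psi(X\cdot v)=\sigma(X)\Psi(v)$ for $X\in\{L_m,W_m,G_r,C_i\}$ through the embedding $\sigma$, using the hypothesis $g_m(\tfrac{1}{2}t)=\tfrac{1}{2}h_{2m}(t)$ precisely where you do. Your closing explanation of why the constant $\sqrt{\lambda/2}$ is forced (the two reciprocal discrepancy factors in the $G_r$-step) is a small conceptual refinement the paper states only implicitly, but it does not change the argument.
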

\begin{proof}
Let
\begin{eqnarray*}
\Psi:\Omega_{\S^{\frac{1}{2}}}(\lambda,\alpha,g)&\longrightarrow&\Omega_{\S^{0}}(\sqrt{\lambda},\alpha,h)
\\f(t,s)&\longmapsto&f(\frac{1}{2}t^2,\frac{1}{2}s)
\\k(y,x)&\longmapsto&\sqrt{\frac{\lambda}{2}}tk(\frac{1}{2}t^2,\frac{1}{2}s)
\end{eqnarray*}
be the linear map from  $\Omega_{\S^{\frac{1}{2}}}(\lambda,\alpha,g)$ to $\Omega_{\S^{0}}(\sqrt{\lambda},\alpha,h)$.
 Clearly, $\Psi$ is bijective.  Now we   show that it is an $\S^{\frac{1}{2}}$-module isomorphism.

For any $m\in\Z, r\in\frac{1}{2}+\Z, g(t),h(t)\in\C[t],f(t,s)\in\C[t,s], k(y,x)\in\C[y,x]$, we check that
\begin{eqnarray*}
&&\Psi(L_mf(t,s))
\\&=&\Psi\Big(\lambda^m(s+g_m(t))f(t,s-m)
\\&&-m\lambda^m(t-m\alpha)\frac{\partial}{\partial t}(f(t,s-m))\Big)
\\&=&\lambda^m(\frac{s}{2}+g_m(\frac{1}{2}t^2))f(\frac{1}{2}t^2,\frac{1}{2}s-m)
\\&&-m\lambda^m(\frac{1}{2}t^2-m\alpha)\frac{\partial}{\partial (\frac{1}{2}t^2)}(f(\frac{1}{2}t^2,\frac{1}{2}s-m)),
\\&& \Psi(L_{m}k(y,x))
\\&=&\Psi\Big(\lambda^m(x-\frac{m}{2}+g_m(y))k(y,x-m)
\\&&-m\lambda^m(y-m\alpha)\frac{\partial}{\partial y}k(y,x-m)\Big)
\\&=&\sqrt{\frac{\lambda}{2}}\lambda^m(\frac{1}{2}s-\frac{m}{2}+g_m(\frac{1}{2}t^2))tk(\frac{1}{2}t^2,\frac{1}{2}s-m)
\\&&-m\sqrt{\frac{\lambda}{2}}\lambda^m(\frac{1}{2}t^2-m\alpha)t\frac{\partial}{\partial (\frac{1}{2}t^2)}k(\frac{1}{2}t^2,\frac{1}{2}s-m),
\\&& \Psi(W_mf(t,s))
\\&=&\Psi\Big(\lambda^m(t-m\alpha)f(t,s-m)\Big)=\lambda^m(\frac{1}{2}t^2-m\alpha)f(\frac{1}{2}t^2,\frac{1}{2}s-m),
\\&& \Psi(W_{m}k(y,x))
\\&=&\lambda^m(y-m\alpha)k(y,x-m)\Big)=\sqrt{\frac{\lambda}{2}}\lambda^m(\frac{1}{2}t^2-m\alpha)tk(\frac{1}{2}t^2,\frac{1}{2}s-m),
\\&& \Psi(G_rf(t,s))
\\&=&\Psi\Big(\lambda^{r-\frac{1}{2}}f(y,x-r)\Big)
=\frac{\lambda^{r}}{\sqrt{2}}tf(\frac{1}{2}t^2,\frac{1}{2}s-r),
\\&& \Psi(G_rk(y,x))
\\&=&\Psi\Big(\lambda^{r+\frac{1}{2}}(t-2r\alpha)k(t,s-r)\Big)
=\lambda^{r+\frac{1}{2}}(\frac{1}{2}t^2-2r\alpha)k(\frac{1}{2}t^2,\frac{1}{2}s-r),
\\&&  \Psi(C_i(f(t,s)))=\Psi(C_i(k(y,x)))=0
\end{eqnarray*}
for $i=1,2.$
Then by the following calculation
\begin{eqnarray*}
&&\sigma(L_m)\Psi(f(t,s))=\frac{1}{2}L_{2m}f(\frac{1}{2}t^2,\frac{1}{2}s)
\\&=&
\frac{1}{2} \lambda^{m}(s+h_{2m}(t^2))f(\frac{1}{2}t^2,\frac{1}{2}(s-2m))
\\&&-m\lambda^m(t^2-2m\alpha)\frac{\partial}{\partial (t^2)}(f(\frac{1}{2}t^2,\frac{1}{2}(s-2m))),
\\&& \sigma(L_{m})\Psi(k(y,x))=\frac{1}{2}L_{2m}\sqrt{\frac{\lambda}{2}}tk(\frac{1}{2}t^2,\frac{1}{2}s)
\\&=&
\sqrt{\frac{\lambda}{2}}\lambda^m(\frac{s}{2}-\frac{m}{2}+\frac{1}{2}h_{2m}(t^2))tk(\frac{1}{2}t^2,\frac{1}{2}s-m)
\\&&-m\sqrt{\frac{\lambda}{2}}\lambda^m(\frac{1}{2}t^2-m\alpha)t\frac{\partial}{\partial (\frac{1}{2}t^2)}k(\frac{1}{2}t^2,\frac{1}{2}s-m),
\\&& \sigma(W_m)\Psi(f(t,s))=\frac{1}{2}W_{2m}f(\frac{1}{2}t^2,\frac{1}{2}s)
\\&=&\lambda^m(\frac{1}{2}t^2-m\alpha)f(\frac{1}{2}t^2,\frac{s}{2}-m),
\\&& \sigma(W_{m})\Psi(k(y,x))=\frac{1}{2}W_{2m}\sqrt{\frac{\lambda}{2}}tk(\frac{1}{2}t^2,\frac{1}{2}s)
\\&=&\sqrt{\frac{\lambda}{2}}\lambda^m(\frac{1}{2}t^2-m\alpha)tk(\frac{1}{2}t^2,\frac{s}{2}-m),
\\&& \sigma(G_r)\Psi(f(t,s))=\frac{1}{\sqrt{2}}G_{2r}f(\frac{1}{2}t^2,\frac{1}{2}s)
\\&=&\frac{1}{\sqrt{2}}\lambda^rf(\frac{1}{2}t^2,\frac{1}{2}s-r),
\\&& \sigma(G_r)\Psi(k(y,x))=\frac{1}{\sqrt{2}}G_{2r}\sqrt{\frac{\lambda}{2}}tk(\frac{1}{2}t^2,\frac{1}{2}s)
\\&=&\lambda^{r+\frac{1}{2}}(\frac{1}{2}t^2-2r\alpha)k(\frac{1}{2}t^2,\frac{1}{2}s-r),
\\&&   \sigma(C_i(f(t,s)))=\sigma(C_i(k(y,x)))=0,
\end{eqnarray*}
one has
\begin{eqnarray*}
&&\Psi(L_mf(t,s))=\sigma(L_m)\Psi(f(t,s)), \Psi(L_{m}k(y,x))=\sigma(L_{m})\Psi(k(y,x)),
\\&&\Psi(W_mf(t,s))= \sigma(W_m)\Psi(f(t,s)),\Psi(W_{m}k(y,x))=\sigma(W_{m})\Psi(k(y,x)),
\\&&\Psi(G_rf(t,s))=\sigma(G_r)\Psi(f(t,s)),\Psi(G_rk(y,x))=\sigma(G_r)\Psi(k(y,x)),
\\&&\Psi(C_i(f(t,s)))=\sigma(C_i(f(t,s)))=\Psi(C_i(k(y,x)))=\sigma(C_i(k(y,x)))=0
\end{eqnarray*}
for $i=1,2$.
Thus, we conclude  that $\Psi$   is an $\S^{\frac{1}{2}}$-module isomorphism.
\end{proof}
Now we  give  an example of the conditions  in Proposition \ref{prop2.7}.
\begin{exam} Let  $h(t)=t,g(\frac{1}{2}t)=t-\alpha$. For   $m\in\Z$,  we obtain
$$
g_m(\frac{1}{2}t)=m(t+\alpha)-2m^2\alpha=\frac{1}{2}h_{2m}(t).
$$
\end{exam}

 According to   Propositions \ref{prop2.6},    \ref{prop2.7} and Theorem \ref{th2.3}, we have   the following corollary.
\begin{coro} Let $\lambda\in\C^*,\alpha\in\C,h(t)\in\C[t]$. Then
 $\Omega_{\S^{\frac{1}{2}}}(\lambda,\alpha,h)$
is simple if and only if   $\alpha\neq0$.
\end{coro}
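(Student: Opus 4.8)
The plan is to read the statement off from the three preceding results by transporting the simplicity question across the embedding $\sigma\colon\S^{\frac12}\hookrightarrow\S^{0}$. Given $\lambda\in\C^*$, $\alpha\in\C$ and $h(t)\in\C[t]$, the first task is to exhibit a polynomial $\tilde h(t)\in\C[t]$ for which the compatibility hypothesis of Proposition~\ref{prop2.7} holds, namely $h_m(\tfrac12 t)=\tfrac12\,\tilde h_{2m}(t)$ for every $m\in\Z$. With such a $\tilde h$ in hand, Proposition~\ref{prop2.7} gives $\Omega_{\S^{\frac12}}(\lambda,\alpha,h)\cong\Omega_{\S^{0}}(\sqrt{\lambda},\alpha,\tilde h)$ as $\S^{\frac12}$-modules, and the conclusion follows by combining Proposition~\ref{prop2.6} and Theorem~\ref{th2.3}.

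For the existence of $\tilde h$ I would reduce the infinite family of conditions to the single case $m=1$. Writing $a_m(t):=h_m(\tfrac12 t)$ and $b_m(t):=\tfrac12\,\tilde h_{2m}(t)$ and substituting into the defining relation \eqref{211}, one checks that both families satisfy \eqref{211} with $\alpha$ replaced by $2\alpha$; since the closed form $h_m(t)=mh(t)-m(m-1)\alpha\tfrac{h(t)-h(\alpha)}{t-\alpha}$ shows that such a family is completely determined by its member at $m=1$, it suffices to secure $a_1=b_1$, that is
\[
h(\tfrac12 t)=\tilde h(t)-\alpha\,\frac{\tilde h(t)-\tilde h(\alpha)}{t-\alpha}.
\]
Clearing the denominator rewrites this as $h(\tfrac12 t)(t-\alpha)=\tilde h(t)(t-2\alpha)+\alpha\tilde h(\alpha)$, which for $\alpha\neq0$ is solved by the polynomial
\[
\tilde h(t)=\frac{h(\tfrac12 t)(t-\alpha)-\alpha\,h(\alpha)}{t-2\alpha},
\]
after imposing $\tilde h(\alpha)=h(\alpha)$ so that the numerator vanishes at $t=2\alpha$ (a choice one verifies to be self-consistent), and for $\alpha=0$ simply by $\tilde h(t)=h(\tfrac12 t)$.

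Once the isomorphism is in place the remaining steps are pure citation. By Proposition~\ref{prop2.6}, $\Omega_{\S^{0}}(\sqrt{\lambda},\alpha,\tilde h)$ is simple as an $\S^{\frac12}$-module if and only if it is simple as an $\S^{0}$-module, and by Theorem~\ref{th2.3} the latter holds if and only if $\alpha\neq0$. Transporting back along the isomorphism of Proposition~\ref{prop2.7}, we conclude that $\Omega_{\S^{\frac12}}(\lambda,\alpha,h)$ is simple precisely when $\alpha\neq0$.

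I expect the only genuine obstacle to be the verification that matching the single datum $a_1=b_1$ propagates to all $m$; the point is to recognize that $\{a_m\}$ and $\{b_m\}$ are both the unique $2\alpha$-family attached to a prescribed first term, so the whole compatibility condition collapses to one polynomial identity, which is then always solvable as above. Everything else is a direct appeal to Propositions~\ref{prop2.6}, \ref{prop2.7} and Theorem~\ref{th2.3}.
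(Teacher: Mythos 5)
Your proposal is correct and follows exactly the paper's route: the paper obtains this corollary precisely by combining Propositions \ref{prop2.6}, \ref{prop2.7} and Theorem \ref{th2.3}, as you do. The one step you add---the existence of a suitable $\tilde h$, which the paper leaves unverified---is settled correctly: both families $\bigl(h_m(\tfrac12 t)\bigr)_{m\in\Z}$ and $\bigl(\tfrac12\tilde h_{2m}(t)\bigr)_{m\in\Z}$ are literally of the closed form $mF(t)-m(m-1)(2\alpha)\tfrac{F(t)-F(2\alpha)}{t-2\alpha}$ with $F$ their $m=1$ member (for the second family one computes $F(2\alpha)=\tilde h(\alpha)$ and $F(t)-F(2\alpha)=(t-2\alpha)\tfrac{\tilde h(t)-\tilde h(\alpha)}{t-\alpha}$, which is cleaner than invoking uniqueness of solutions of \eqref{211}), so matching at $m=1$ does suffice, and your $\tilde h(t)=\bigl(h(\tfrac12 t)(t-\alpha)-\alpha h(\alpha)\bigr)/(t-2\alpha)$ is genuinely a polynomial because the numerator vanishes at $t=2\alpha$.
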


 By the similar arguments   in the proof of Theorem \ref{th2.4}, it yields the  following theorem.
\begin{theo}
 Let $\lambda,\mu\in\C^*,\alpha,\beta\in\C,h(t),g(t)\in\C[t]$.
Then  \begin{itemize}
\item[\rm(i)]    $\Pi\big(\Omega_{\S^{\frac{1}{2}}}(\lambda,\alpha,h)\big)\ncong\Omega_{\S^{\frac{1}{2}}}(\lambda^\prime,\alpha^\prime,h^\prime)$ for any $\lambda^\prime,\alpha^\prime\in\C,h^\prime(t)\in\C[t]$;

\item[\rm(ii)]     $\Omega_{\S^{\frac{1}{2}}}(\lambda,\alpha,h)\cong\Omega_{\S^{\frac{1}{2}}}(\mu,\beta,g)$ if and only if $\lambda=\mu,\alpha=\beta$ and $h=g$.
\end{itemize}
\end{theo}

\section{ Classification of free U($\mathfrak{t}$)-modules of rank 1 over   $\S^{0}$}
Assume that  $V=V_{\bar 0}\oplus V_{\bar1}$ is an $\S^0$-module such that it is free of rank $1$ as a $U(\mathfrak{t})$-module,
where $\mathfrak{t}=\C L_0\oplus \C W_0\oplus \C G_0$. From  the definition of $\S^0$ in \eqref{def1.1},   we have
$$L_0W_0=W_0L_0,\ L_0G_0=G_0L_0,\ W_0G_0=G_0W_0  \ \mathrm{and}\ G_0^2=W_0.$$
 Thus $U(\mathfrak{t})=\C[L_0,W_0]\oplus G_0\C[L_0,W_0]$. Choose a homogeneous basis element $\mathbf{1}$ in $V$. Without loss of generality, up
to a parity-change, we may assume $\mathbf{1}\in V_{\bar 0}$  and
$$V=U(\mathfrak{t})\mathbf{1}=\C[L_0,W_0]\mathbf{1}\oplus G_0\C[L_0,W_0]\mathbf{1}$$
with $V_{\bar 0}=\C[L_0,W_0]\mathbf{1}$ and $V_{\bar1}=G_0\C[L_0,W_0]\mathbf{1}$.
We can  suppose that $L_0\mathbf{1}=s \mathbf{1},W_0\mathbf{1}=t^2 \mathbf{1},G_0\mathbf{1}=t\mathbf{1}$.
In the following section,  $V=V_{\bar 0}\oplus V_{\bar1}$ is equal to
$\C[t^2,s]\mathbf{1}\oplus t\C[t^2,s]\mathbf{1}$ with $V_{\bar0}=\C[t^2,s]\mathbf{1}$ and $V_{\bar1}=t\C[t^2,s]\mathbf{1}$.
\begin{remark} The free $U(\mathfrak{t})$-module $V$ of rank $1$ over  $\S^{0}$  can be viewed  as a free $U(\mathfrak{T})$-module of
rank $2$ by defining $\mathbf{1}_{\bar 0}:=\mathbf{1},\mathbf{1}_{\bar 1}:=G_0\mathbf{1}$.
\end{remark}

According to  $\S_{\bar 0}\cong\mathcal{W}$,
it is clear  that $V_{\bar 0}$  can be regarded as a $\mathcal{W}$-module which is free of rank $1$ as a   $U(\mathfrak{T})$-module.
From  Lemma \ref{le2.1} and  Theorem \ref{th2.1},  for
  $m\in\Z$, $f(t^2,s)\in \C[t^2,s]$,  there exist $\lambda\in\C^*,\alpha\in\C,h(t)\in\C[t]$ such that
\begin{eqnarray}\nonumber
&&L_m(f(t^2,s))=\lambda^m(s+h_m(t^2))f(t^2,s-m)-m\lambda^m(t^2-m\alpha)\frac{\partial}{\partial t^2}(f(t^2,s-m)),
\\&& \label{eq3.2}
W_m(f(t^2,s))=\lambda^m(t^2-m\alpha)f(t^2,s-m),\ C_1(f(t^2,s))=C_2(f(t^2,s))=0.
\end{eqnarray}

Firstly, we give the following results.
 \begin{lemm}\label{lem3.2}
For $m\in\Z, f(t^2,s)\in\C[t^2,s],tf(t^2,s)\in t\C[t^2,s]$, we obtain
\begin{itemize}
\item[\rm(i)] $G_mtf(t^2,s)\mathbf{1}=f(t^2,s-m)G_mt\mathbf{1}$;

\item[\rm(ii)] $G_mf(t^2,s)\mathbf{1}=f(t^2,s-m)G_m\mathbf{1}$;

\item[\rm(iii)] $C_itf(t^2,s)\mathbf{1}=0$ for $i=1,2.$
\end{itemize}
\end{lemm}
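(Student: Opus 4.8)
The plan is to reduce all three identities to two elementary commutation relations read off from \eqref{def1.1}, together with the observation that $L_0$ and $W_0$ act on all of $V$ as the ``multiply by $s$'' and ``multiply by $t^2$'' operators. First I would record the operator identities on $V$. Putting $m=0$ in $[L_m,G_r]=(r-\frac m2)G_{m+r}$ gives $[L_0,G_m]=mG_m$, whence $G_mL_0=(L_0-m)G_m$ and, by an obvious induction, $G_mL_0^{\,j}=(L_0-m)^jG_m$ for every $j\in\Z_+$. Since \eqref{def1.1} contains no bracket between the $W$'s and the $G$'s, we have $[W_0,G_m]=0$, i.e. $G_mW_0=W_0G_m$. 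Because $L_0$ and $W_0$ commute, these combine into the single operator identity $G_m\,f(W_0,L_0)=f(W_0,L_0-m)\,G_m$ on $V$, valid for every polynomial $f$. Next I would note that $L_0\mathbf 1=s\mathbf 1$, $W_0\mathbf 1=t^2\mathbf 1$ together with $[L_0,G_0]=[W_0,G_0]=0$ force $L_0$ and $W_0$ to act as multiplication by $s$ and $t^2$ on the odd part $V_{\bar 1}$ as well as on $V_{\bar 0}$; consequently $f(W_0,L_0-m)$ acts on every vector of $V$ as multiplication by the polynomial $f(t^2,s-m)$.

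Granting this, (ii) is immediate: $G_mf(t^2,s)\mathbf 1=G_m f(W_0,L_0)\mathbf 1=f(W_0,L_0-m)G_m\mathbf 1=f(t^2,s-m)G_m\mathbf 1$. For (i) I would first rewrite the odd vector as $tf(t^2,s)\mathbf 1=f(W_0,L_0)G_0\mathbf 1$ (using $G_0\mathbf 1=t\mathbf 1$ and that $G_0$ commutes with $L_0,W_0$), and then apply the same identity to get $G_m\,tf(t^2,s)\mathbf 1=f(W_0,L_0-m)G_mG_0\mathbf 1=f(t^2,s-m)G_mt\mathbf 1$. Finally, for (iii) I would invoke the centrality of $C_1,C_2$ together with the fact, recorded in \eqref{eq3.2}, that they annihilate $V_{\bar 0}$: writing $tf(t^2,s)\mathbf 1=G_0\big(f(t^2,s)\mathbf 1\big)$ gives $C_i\,tf(t^2,s)\mathbf 1=G_0\,C_i f(t^2,s)\mathbf 1=G_0\cdot 0=0$ for $i=1,2$.

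The whole argument is formal; the single point demanding care is the bookkeeping of the $s$-shift. I must get the direction of $G_mL_0=(L_0-m)G_m$ right and, more importantly, verify that $L_0$ and $W_0$ act diagonally (as multiplication operators) on the odd part $V_{\bar 1}$ and not merely on $V_{\bar 0}$, since it is exactly this that lets the scalar polynomial $f(t^2,s-m)$ be pulled back out in front of $G_m\mathbf 1$ and $G_mt\mathbf 1$. Beyond this there is no genuine obstacle.
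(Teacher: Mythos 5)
Your proposal is correct and follows essentially the same route as the paper: both rest on the commutation relations $G_mL_0=(L_0-m)G_m$ and $G_mW_0=W_0G_m$, pulled through polynomials in $L_0,W_0$ to give $G_m\,f(W_0,L_0)=f(W_0,L_0-m)\,G_m$, together with centrality of $C_i$ and $C_i\mathbf{1}=0$ from \eqref{eq3.2} for part (iii). Your write-up is in fact slightly more careful than the paper's, since you make explicit the $W_0$-commutation and the fact that $L_0,W_0$ act as multiplication by $s,t^2$ on the odd part $V_{\bar 1}$ as well, points the paper leaves tacit.
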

\begin{proof}  (i)  It is easy to get that
$G_mL_0t\mathbf{1}=(L_0-m)G_mt\mathbf{1}$ by  the
relations of $\mathfrak{S}^0$.
Recursively, we conclude  that
$G_mL_0^nt\mathbf{1}=(L_0-m)^nG_mt\mathbf{1}$ for $n\in\Z_+$.
Hence,
$G_mtf(t^2,s)\mathbf{1}=f(t^2,s-m)G_mt\mathbf{1}$ for $m\in\Z$.
Similarly, we obtain (ii).

(iii) By \eqref{eq3.2}, we know that $C_if(t^2,s)\mathbf{1}=0$, which gives $C_i\mathbf{1}=0$  for $i=1,2$.
Thus   $C_itf(t^2,s)\mathbf{1}=tf(t^2,s)C_i\mathbf{1}=0$ for $i=1,2$.
\end{proof}

 \begin{lemm}\label{le3.33}
For $m\in\Z$, we  get $G_m\mathbf{1}=\lambda^mt\mathbf{1}$.
\end{lemm}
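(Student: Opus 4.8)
The plan is to write $G_m\mathbf{1}$ and $G_m t\mathbf{1}$ in terms of two unknown polynomials and then pin these down by feeding the generator $\mathbf{1}$ into the two ``quadratic'' superbrackets $[G_0,G_m]$ and $[G_m,G_m]$, after which a degree argument forces the answer.

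First I would record the ambient structure. Since $G_0^2=W_0$ and $L_0,W_0,G_0$ pairwise commute, $U(\mathfrak{h})$ is the commutative polynomial ring $\C[L_0,G_0]$, so under the identification $V=U(\mathfrak{h})\mathbf{1}$ with $L_0\leftrightarrow s$, $G_0\leftrightarrow t$, $W_0\leftrightarrow t^2$, each element of $\mathfrak{h}$ acts by multiplication; in particular $G_0$ acts as multiplication by $t$, while $W_m\mathbf{1}=\lambda^m(t^2-m\alpha)\mathbf{1}$ by \eqref{eq3.2}. As $G_m\mathbf{1}$ is odd I may write $G_m\mathbf{1}=t\,g_m(t^2,s)\mathbf{1}$, and as $G_m t\mathbf{1}=G_mG_0\mathbf{1}$ is even I may write $G_m t\mathbf{1}=p_m(t^2,s)\mathbf{1}$, for polynomials $g_m,p_m\in\C[t^2,s]$ to be determined; the goal is exactly $g_m=\lambda^m$.

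Next I would extract two equations. Applying $[G_0,G_m]=2W_m$ (read off from \eqref{def1.1}, with no central contribution since $C_i\mathbf{1}=0$ by Lemma \ref{lem3.2}(iii)) to $\mathbf{1}$, and using that $G_0$ acts by multiplication by $t$, gives $G_0G_m\mathbf{1}=t^2g_m\mathbf{1}$ and $G_mG_0\mathbf{1}=p_m\mathbf{1}$, so that
\begin{eqnarray*}
t^2g_m(t^2,s)+p_m(t^2,s)=2\lambda^m(t^2-m\alpha).
\end{eqnarray*}
Applying instead $[G_m,G_m]=2W_{2m}$, i.e. $G_m^2=W_{2m}$, to $\mathbf{1}$ and using Lemma \ref{lem3.2}(i) gives $G_m^2\mathbf{1}=g_m(t^2,s-m)\,G_m t\mathbf{1}=g_m(t^2,s-m)p_m(t^2,s)\mathbf{1}$, so that
\begin{eqnarray*}
g_m(t^2,s-m)\,p_m(t^2,s)=\lambda^{2m}(t^2-2m\alpha).
\end{eqnarray*}

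Finally I would eliminate $p_m$ and solve the resulting functional equation by degree considerations. Substituting $p_m=2\lambda^m(t^2-m\alpha)-t^2g_m(t^2,s)$ into the second relation yields
\begin{eqnarray*}
g_m(t^2,s-m)\big(2\lambda^m(t^2-m\alpha)-t^2g_m(t^2,s)\big)=\lambda^{2m}(t^2-2m\alpha).
\end{eqnarray*}
Viewing both sides as polynomials in $s$, the term $-t^2g_m(t^2,s-m)g_m(t^2,s)$ has $s$-degree equal to twice $\deg_s g_m$ with nonzero leading coefficient, whereas the right-hand side is constant in $s$; this forces $\deg_s g_m=0$. Repeating the count in the variable $t^2$ (now the term $-t^2g_m^2$ dominates, having degree $2\deg_{t^2}g_m+1$ against degree $1$ on the right) forces $g_m$ to be a constant $c$, and comparing the coefficients of $t^2$ collapses the relation to $(c-\lambda^m)^2=0$, whence $c=\lambda^m$ and $G_m\mathbf{1}=\lambda^m t\mathbf{1}$. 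I expect the elimination-and-degree step to be the only genuine obstacle: the whole argument hinges on the quadratic term strictly dominating in both gradings, which rules out spurious polynomial solutions, and one must run the degree count in $s$ \emph{and} in $t^2$ to collapse $g_m$ to a scalar before the quadratic in $c$ finishes the proof.
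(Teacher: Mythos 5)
Your proof is correct, and it actually supplies more than the paper's own text does: the printed proof of Lemma \ref{le3.33} is truncated, consisting only of the ansatz $G_m\mathbf{1}=tf_m(t^2,s)\mathbf{1}$ and the single relation $G_mG_m\mathbf{1}=W_{2m}\mathbf{1}$ (misprinted there as $I_{2m}$), after which it breaks off. Note that this one relation alone, namely $f_m(t^2,s-m)\,p_m(t^2,s)=\lambda^{2m}(t^2-2m\alpha)$ where $G_mt\mathbf{1}=p_m(t^2,s)\mathbf{1}$, cannot determine $f_m$: any factorization of the right-hand side, e.g.\ $f_m=\lambda^{2m}(t^2-2m\alpha)$ and $p_m=1$, satisfies it. Your second equation $t^2f_m(t^2,s)+p_m(t^2,s)=2\lambda^m(t^2-m\alpha)$, obtained from $[G_0,G_m]=2W_m$ together with the correct observation that $G_0\in\mathfrak{h}$ acts on $V=U(\mathfrak{h})\mathbf{1}$ as multiplication by $t$, is exactly the missing constraint that makes the system determined. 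After eliminating $p_m$, both of your degree counts are necessary and correctly executed: the leading coefficients $-t^2a_d(t^2)^2$ (in $s$) and $-b_e^2$ (in $t^2$) are nonzero because $\C[t^2]$ is an integral domain, so the quadratic term strictly dominates unless $f_m$ is a constant $c$, and then $(c-\lambda^m)^2=0$ forces $c=\lambda^m$, with the constant term $-2m\alpha\lambda^m c=-2m\alpha\lambda^{2m}$ automatically consistent. As a bonus, your elimination recovers $p_m=\lambda^m(t^2-2m\alpha)$, which is precisely the identity $G_mt\mathbf{1}=\lambda^m(t^2-2m\alpha)\mathbf{1}$ that the paper re-derives separately inside the proof of Theorem \ref{th3.5}; so your argument proves the lemma and that step of the theorem in one stroke.
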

\begin{proof} To prove this, we  suppose
\begin{eqnarray*}
G_m\mathbf{1} = tf_m(t^2,s)\mathbf{1}\in t\C[t^2,s]\mathbf{1}
\end{eqnarray*} for $m\in\Z$.
For any $m\in\Z$, by $G_m^2\mathbf{1}=W_{2m}\mathbf{1}$, we have
\begin{eqnarray}\label{le3.66}
f_m(t^2,s-m)(2\lambda^m(t^2-m\alpha)-t^2f_m(t^2,s))=\lambda^{2m}(t^2-2m\alpha).
\end{eqnarray}
Considering the degree of $s$ and $t^2$ in above equation, we conclude that
$f_m(t^2,s-m)=f_m\in\C$.
We rewrite \eqref{le3.66} as
$$f_m((2\lambda^m-f_m)t^2-2m\lambda^m\alpha)=\lambda^{2m}(t^2-2m\alpha),$$
which implies $f_m=\lambda^m$.
\end{proof}

Now we present the main result of this section, which gives a
complete classification of free $U(\mathfrak{t})$-modules of rank 1 over $\S^0$.
\begin{theo}\label{th3.5}
 Assume that $V$ is an  $\S^0$-module such that the restriction of $V$ as a $U(\mathfrak{t})$-module is free of rank 1.
 Up to a
parity-change, we have $V\cong\Omega_{\S^0}(\lambda,\alpha,h)$ for $\lambda\in\C^*,\alpha\in\C,h(t)\in\C[t]$ with the $\S^0$-module structure defined as
in \eqref{2.22}-\eqref{2.88}.
\end{theo}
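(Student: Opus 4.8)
The plan is to show that the $\S^0$-action on $V=\C[t^2,s]\mathbf 1\oplus t\C[t^2,s]\mathbf 1$ (set up before the theorem, with $L_0\mathbf 1=s\mathbf 1$, $W_0\mathbf 1=t^2\mathbf 1$, $G_0\mathbf 1=t\mathbf 1$) is forced to agree with \eqref{2.22}--\eqref{2.88} for the parameters $\lambda,\alpha,h$ produced below. Once every structure constant is matched, the identity assignment $f(t^2,s)\mathbf 1\mapsto f(t^2,s)$, $tf(t^2,s)\mathbf 1\mapsto tf(t^2,s)$ is an $\S^0$-isomorphism $V\cong\Omega_{\S^0}(\lambda,\alpha,h)$; the case $\mathbf 1\in V_{\bar 1}$ instead yields $\Pi(\Omega_{\S^0}(\lambda,\alpha,h))$, which is exactly the phrase ``up to a parity''. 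I would begin by restricting to the even subalgebra: since $\S_{\bar 0}\cong\mathcal W$ and $V_{\bar 0}=\C[t^2,s]\mathbf 1$ is free of rank $1$ over $\C[L_0,W_0]$, the classification of such $\mathcal W$-modules in \cite{CG} supplies $\lambda\in\C^*,\alpha\in\C,h(t)\in\C[t]$ for which $L_m,W_m$ act on $V_{\bar 0}$ precisely by \eqref{2.22} and \eqref{2.44} (this is the action recorded in \eqref{eq3.2} and the line preceding it). These are the parameters of the target module.

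Next I would pin down the odd generators. Lemma \ref{le3.33} gives $G_m\mathbf 1=\lambda^m t\mathbf 1$, and Lemma \ref{lem3.2}(ii) immediately upgrades this to $G_mf(t^2,s)\mathbf 1=f(t^2,s-m)G_m\mathbf 1=\lambda^m tf(t^2,s-m)\mathbf 1$, which is \eqref{2.66}. For \eqref{2.77} the one nontrivial evaluation is $G_mt\mathbf 1=G_mG_0\mathbf 1$, which I would compute from the anticommutator $[G_m,G_0]=2W_m$ (valid on $V$ because $C_2$ acts as zero): writing $G_mG_0\mathbf 1=2W_m\mathbf 1-G_0G_m\mathbf 1$ and using $G_0G_m\mathbf 1=\lambda^m G_0t\mathbf 1=\lambda^m G_0^2\mathbf 1=\lambda^m W_0\mathbf 1=\lambda^m t^2\mathbf 1$ together with $W_m\mathbf 1=\lambda^m(t^2-m\alpha)\mathbf 1$ yields $G_mt\mathbf 1=\lambda^m(t^2-2m\alpha)\mathbf 1$. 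Lemma \ref{lem3.2}(i) then gives $G_mtf(t^2,s)\mathbf 1=f(t^2,s-m)G_mt\mathbf 1=\lambda^m(t^2-2m\alpha)f(t^2,s-m)\mathbf 1$, i.e.\ \eqref{2.77}.

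Finally I would transport the even action onto $V_{\bar 1}$ through $G_0$, exploiting $tf(t^2,s)\mathbf 1=G_0f(t^2,s)\mathbf 1$ (Lemma \ref{lem3.2}(ii) at $m=0$) and the relations $[L_m,G_0]=-\frac m2 G_m$ and $[W_m,G_0]=0$. For $L_m$ this gives $L_m(tf(t^2,s)\mathbf 1)=G_0L_mf(t^2,s)\mathbf 1-\frac m2 G_mf(t^2,s)\mathbf 1$; since applying $G_0$ to a polynomial times $\mathbf 1$ merely multiplies by $t$, feeding in the even output and \eqref{2.66} reproduces \eqref{2.33}, with the $-\frac m2 G_m$ term producing exactly the shift $s\mapsto s-\frac m2$. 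The identical maneuver with $[W_m,G_0]=0$ yields \eqref{2.55}, and Lemma \ref{lem3.2}(iii) gives $C_i=0$ on $V_{\bar 1}$, completing \eqref{2.88}. With all actions matched, $V\cong\Omega_{\S^0}(\lambda,\alpha,h)$ up to parity.

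The genuine difficulties sit upstream of this statement rather than inside it: everything collapses to the even case once Lemma \ref{le3.33} is in hand, so the substantive inputs are the rank-$1$ $\mathcal W$-module classification imported from \cite{CG} and the identification $G_m\mathbf 1=\lambda^m t\mathbf 1$ of Lemma \ref{le3.33}. Within the present argument the only step requiring care is the bookkeeping of the half-integer shift $s\mapsto s-\frac m2$ in \eqref{2.33}, which must emerge from the correction term $-\frac m2 G_m$ and not from the even action itself.
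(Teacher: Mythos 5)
Your proposal is correct and follows essentially the same route as the paper's own proof: take the parameters $\lambda,\alpha,h$ from the rank-one $\mathcal{W}$-module classification on $V_{\bar 0}$, get \eqref{2.66} from Lemmas \ref{lem3.2} and \ref{le3.33}, and transport the even action to $V_{\bar 1}=G_0\C[t^2,s]\mathbf{1}$ via $[L_m,G_0]=-\frac{m}{2}G_m$ and $[W_m,G_0]=0$, which is exactly how the paper produces \eqref{2.33} and \eqref{2.55} including the shift $s\mapsto s-\frac{m}{2}$. The only (cosmetic) difference is your derivation of $G_mt\mathbf{1}=\lambda^m(t^2-2m\alpha)\mathbf{1}$ from $[G_m,G_0]=2W_m$ together with $G_0t\mathbf{1}=W_0\mathbf{1}=t^2\mathbf{1}$, whereas the paper posits $G_mt\mathbf{1}=g_m(t^2,s)\mathbf{1}$ and solves $G_mG_mt\mathbf{1}=W_{2m}t\mathbf{1}$; both are one-line verifications yielding the same \eqref{2.77}.
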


\begin{proof}
For any $f(t^2,s) \in\C[t^2,s]$, by Lemma \ref{lem3.2} (ii) and Lemma \ref{le3.33}, we obtain
\begin{eqnarray}
G_mf(t^2,s)\mathbf{1}=f(t^2,s-m)G_m\mathbf{1}=\lambda^mf(t^2,s-m)t\mathbf{1}.
\end{eqnarray}
Moreover, we have
\begin{eqnarray*}
&&L_{m}tf(t^2,s)\mathbf{1}
\\&=&L_{m}G_0f(t^2,s)\mathbf{1}
\\&=&G_0L_{m}f(t^2,s)\mathbf{1}-\frac{m}{2}G_{m}f(t^2,s)\mathbf{1}
\\&=&\lambda^m(s-\frac{m}{2}+h_m(t^2))f(t^2,s-m)t\mathbf{1}-m\lambda^m(t^2-m\alpha)\frac{\partial}{\partial t^2}(f(t^2,s-m))t\mathbf{1}
\end{eqnarray*}
and
\begin{eqnarray*}
W_{m}tf(t^2,s)\mathbf{1}=W_{m}G_0f(t^2,s)\mathbf{1}=G_0W_{m}f(t^2,s)\mathbf{1}=\lambda^m(t^2-m\alpha)f(t^2,s-m)t\mathbf{1}.
\end{eqnarray*}
Suppose $G_mt\mathbf{1}=g_m(t^2,s)\mathbf{1}\in\C[t^2,s]\mathbf{1}$ for $m\in\Z$. From $G_mG_mt\mathbf{1}=W_{2m}t\mathbf{1}$, it is easy to check that
$g_m(t^2,s)=\lambda^m(t^2-2m\alpha)$.
Thus  $G_mt\mathbf{1}=\lambda^m(t^2-2m\alpha)\mathbf{1}$ for $m\in\Z$.
Then by  Lemma \ref{lem3.2} (i), one has
$$G_mtf(t^2,s)\mathbf{1}=\lambda^m(t^2-2m\alpha)f(t^2,s-m)t\mathbf{1}.$$
\end{proof}

\section{ Classification of free U($\mathfrak{T}$)-modules of rank $2$ over   $\S^{\frac{1}{2}}$}
By the definition of     $\S^{\frac{1}{2}}$, we see that it has a $2$-dimensional Cartan subalgebra $\mathfrak{T}=
\C L_0\oplus \C W_0$ (modulo center). Then we have  $U(\mathfrak{T})=\C[L_0,W_0]$. At the same time, we see that $\{W_m\mid m\in\Z\}$ can be
generated by odd elements $G_{\frac{1}{2}+r}$ for $r\in \Z$. Considering  the  pure even or pure odd non-trivial $\S^{\frac{1}{2}}$-modules, we obtain that  the action of $G_{\frac{1}{2}+r}$ on those modules is trivial for $r\in \Z$. This  implies that the action of $W_{m}$ is also trivial for $m\in \Z$.
 Thus, the  pure even or pure odd non-trivial     free $U(\mathfrak{T})$-modules of rank $1$ over $\S^{\frac{1}{2}}$ are isomorphic to Virasoro modules.
 In the following section, we will classify the free $U(\mathfrak{T})$-modules of rank $2$ over
$\S^{\frac{1}{2}}$.

Assume that  $M=M_{\bar 0}\oplus M_{\bar1}$ is an $\S^{\frac{1}{2}}$-module
 such that it is free of rank $2$ as a $U(\mathfrak{T})$-module
with two homogeneous basis elements $v$ and $w$. If the parities of $v$ and $w$ are the same, for any $r\in\Z+\frac{1}{2}$ then
 $G_{\pm r}v=G_{\pm r}w=0$. Therefore,
$$W_{0}v=\frac{1}{2}[G_{r},G_{-r}]v=0,  W_{0}w=\frac{1}{2}[G_{r},G_{-r}]w=0,$$
which gives a contradiction.
Then we conclude that $v$ and $w$ have different parities. Set $v=\mathbf{1}_{\bar 0}\in M_{\bar 0}$ and
$w=\mathbf{1}_{\bar 1}\in M_{\bar 1}$. As a vector space, we get $M_{\bar 0}= \C[t,s]\mathbf{1}_{\bar 0}$ and $M_{\bar 1}= \C[y,x]\mathbf{1}_{\bar 1}$.

By  $\S_{\bar0}\cong\mathcal{W}$,  we know that
  $M_{\bar 0}$ and $M_{\bar 1}$  are both can be viewed     as $\mathcal{W}$-modules.
  According to Lemma \ref{le2.1} and Theorem \ref{th2.1},
there exist
 $\lambda,\mu\in \C^*,\alpha,\beta\in\C,h(t)\in\C[t]$ and $g(y)\in\C[y]$
such that
\begin{eqnarray}\label{eqq4.1}
L_mf(t,s)&=&\lambda^m(s+h_m(t))f(t,s-m) \nonumber
\\&&-m\lambda^m(t-m\alpha)\frac{\partial}{\partial t}(f(t,s-m)),
\\ \label{eqq4.2} W_mf(t,s)&=&\lambda^m(t-m\alpha)f(t,s-m), C_i(f(t,s))=0
\end{eqnarray}
for $i=1,2,f(t,s)\in\C[t,s]$  and
\begin{eqnarray}\label{eqq4.3}
L_mf(y,x)&=&\mu^m(x+g_m(y))f(y,x-m) \nonumber
\\&&-m\mu^m(y-m\beta)\frac{\partial}{\partial y}(f(y,x-m)),
\\ \label{eqq4.4} W_mf(y,x)&=&\mu^m(y-m\beta)f(y,x-m), C_i(f(y,x))=0
\end{eqnarray}
for $i=1,2,f(y,x)\in\C[y,x]$.

Now we give    two preliminary results for later
use.
 \begin{lemm}\label{lem4.21452}
 Let $\lambda,\mu\in\C^*,\alpha,\beta\in\C,f(t,s)\in\C[t,s],k(y,x)\in\C[y,x]$. Then $\lambda=\mu,\alpha=\beta$ and there exists $c\in\C^*$ such that one
of the following two cases occurs.
\begin{itemize}
\item[\rm(i)] $G_{\frac{1}{2}}\mathbf{1}_{\bar0}=
    c\mathbf{1}_{\bar1},G_{\frac{1}{2}}\mathbf{1}_{\bar1}
    =\frac{1}{c}\lambda(t-\alpha)\mathbf{1}_{\bar0}$;

\item[\rm(ii)] $G_{\frac{1}{2}}\mathbf{1}_{\bar0}=\frac{1}{c}\lambda(y-\alpha)\mathbf{1}_{\bar1},G_{\frac{1}{2}}1_{\bar1}=c\mathbf{1}_{\bar0}$.
\end{itemize}
\end{lemm}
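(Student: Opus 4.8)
The plan is to pin down the action of $G_{\frac12}$ on the two free generators by exploiting that $G_{\frac12}$ is odd together with the single quadratic relation $G_{\frac12}^{2}=W_1$. Since $G_{\frac12}$ interchanges the parities, it sends $M_{\bar0}$ into $M_{\bar1}$ and $M_{\bar1}$ into $M_{\bar0}$, so I may write $G_{\frac12}\mathbf{1}_{\bar0}=p(y,x)\mathbf{1}_{\bar1}$ and $G_{\frac12}\mathbf{1}_{\bar1}=q(t,s)\mathbf{1}_{\bar0}$ for uniquely determined polynomials $p\in\C[y,x]$ and $q\in\C[t,s]$. The whole lemma then reduces to determining $p$ and $q$.

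First I would record two shifting identities coming from the Cartan relations $[L_0,G_{\frac12}]=\frac12 G_{\frac12}$ and $[W_0,G_{\frac12}]=0$ of \eqref{def1.1}. Read as operator identities these give $G_{\frac12}L_0=(L_0-\frac12)G_{\frac12}$ and $G_{\frac12}W_0=W_0G_{\frac12}$, whence by induction $G_{\frac12}\,\phi(W_0,L_0)=\phi(W_0,L_0-\tfrac12)\,G_{\frac12}$ for every polynomial $\phi$. Combined with the fact that on $M_{\bar0}$ (resp.\ $M_{\bar1}$) the operators $W_0,L_0$ act as multiplication by $t,s$ (resp.\ by $y,x$), this lets me transport the action of $G_{\frac12}$ from a generator to an arbitrary element.

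The key step is to apply the relation $G_{\frac12}^{2}=W_1$, which follows from $[G_{\frac12},G_{\frac12}]=2W_1$ (the central term vanishes since $\delta_{1,0}=0$), to each generator. Applying it to $\mathbf{1}_{\bar0}$ and using the shifting identity yields
\[
G_{\frac12}^{2}\mathbf{1}_{\bar0}=p\bigl(t,s-\tfrac12\bigr)\,q(t,s)\,\mathbf{1}_{\bar0}=W_1\mathbf{1}_{\bar0}=\lambda(t-\alpha)\mathbf{1}_{\bar0},
\]
so that $p(t,s-\frac12)\,q(t,s)=\lambda(t-\alpha)$ in $\C[t,s]$. Since the right-hand side is independent of $s$ and $\C[t,s]$ is an integral domain, a degree count in $s$ forces both factors to be independent of $s$; hence $p=p(y)$ and $q=q(t)$ are univariate and satisfy $p\,q=\lambda(t-\alpha)$. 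Applying $G_{\frac12}^{2}=W_1$ to $\mathbf{1}_{\bar1}$ instead gives $q(y)\,p(y)=\mu(y-\beta)$, and comparing the two expressions for the same product forces $\lambda=\mu$ and $\alpha=\beta$.

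Finally, $p\,q=\lambda(t-\alpha)$ has degree $1$, so exactly one of $p,q$ is a nonzero constant $c\in\C^*$ (the product being nonzero) and the other is $\frac{\lambda}{c}$ times the linear factor. The two admissible factorizations are $\deg p=0$, giving $p=c$ and $q(t)=\frac1c\lambda(t-\alpha)$, which is case (i); and $\deg q=0$, giving $q=c$ and $p(y)=\frac1c\lambda(y-\alpha)$, which is case (ii). The only point that demands care is the bookkeeping between the operator $G_{\frac12}$ and the polynomial variables $t,s,y,x$ across the two parities; once the shifting identities are in place the remainder is purely formal, so I do not expect a genuine obstacle beyond keeping the two gradings straight.
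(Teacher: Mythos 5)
Your proposal is correct and follows essentially the same route as the paper: write $G_{\frac12}\mathbf{1}_{\bar0}$ and $G_{\frac12}\mathbf{1}_{\bar1}$ as polynomial multiples of the opposite generators, use the shifting identity $G_{\frac12}\phi(W_0,L_0)=\phi(W_0,L_0-\frac12)G_{\frac12}$ together with $G_{\frac12}^{2}=W_1$ on each generator to get $p(t,s-\frac12)q(t,s)=\lambda(t-\alpha)$ and $q(y,x-\frac12)p(y,x)=\mu(y-\beta)$, and compare the two factorizations to force $\lambda=\mu$, $\alpha=\beta$ and the two cases. Your explicit degree count in $s$ (showing both factors are $s$-independent before factoring the degree-one polynomial) is a detail the paper leaves implicit, but the argument is otherwise identical.
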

\begin{proof} Assume that  $G_{\frac{1}{2}}\mathbf{1}_{\bar0}=f(y,x)\mathbf{1}_{\bar1}$ and $G_{\frac{1}{2}}\mathbf{1}_{\bar1}=g(t,s)\mathbf{1}_{\bar0}$.
According to  $[G_{\frac{1}{2}},G_{\frac{1}{2}}]\mathbf{1}_{\bar0}=2W_1\mathbf{1}_{\bar0}$, we get
$$G_{\frac{1}{2}}^2\mathbf{1}_{\bar0}=f(W_0,L_0-\frac{1}{2})G_{\frac{1}{2}}\mathbf{1}_{\bar1}=f(t,s-\frac{1}{2})g(t,s)\mathbf{1}_{\bar0}$$
and $G_{\frac{1}{2}}^2\mathbf{1}_{\bar0}=W_1\mathbf{1}_{\bar0}=\lambda(t-\alpha)$, which imply $f(t,s-\frac{1}{2})g(t,s)=\lambda(t-\alpha)$.
Therefore,
\begin{eqnarray}\label{eq4.1}
f(t,s-\frac{1}{2})=c,g(t,s) =\frac{1}{c}\lambda(t-\alpha)\quad \mathrm{or} \quad f(t,s-\frac{1}{2})=\frac{1}{c}\lambda(t-\alpha),g(t,s) =c
\end{eqnarray} for $c\in\C^*$.
Similarly, by $G_{\frac{1}{2}}^2\mathbf{1}_{\bar1}=W_1\mathbf{1}_{\bar1}$, we see that  $g(y,x-\frac{1}{2})f(y,x)=\mu(y-\beta)$.
Then by \eqref{eq4.1},
we always obtain  $g(y,x-\frac{1}{2})f(y,x)=\lambda(y-\alpha)$,    which gives     $\lambda=\mu,\alpha=\beta$.
\end{proof}

Due to Lemma  \ref{lem4.21452}, up to a parity-change, we can assume   $\lambda=\mu,\alpha=\beta, G_{\frac{1}{2}}\mathbf{1}_{\bar0}= \mathbf{1}_{\bar1}$ and $G_{\frac{1}{2}}\mathbf{1}_{\bar1}=\lambda(t-\alpha)\mathbf{1}_{\bar0}$
  without loss of generality.
   \begin{lemm}\label{lem43334.33}
For any $m\in\Z,r \in \frac{1}{2}+\Z,$ we get
\begin{itemize}
\item[\rm(i)] $g_m(y)-h_m(y)=-\frac{m}{2};$
\item[\rm(ii)]
$G_r\mathbf{1}_{\bar 0}=\lambda^{r-\frac{1}{2}}\mathbf{1}_{\bar 1};$
\item[\rm(iii)]
$G_r\mathbf{1}_{\bar 1}=\lambda^{r+\frac{1}{2}}(t-2r\alpha)\mathbf{1}_{\bar 0}.$
\end{itemize}
\end{lemm}
\begin{proof}
 (i)
Fix $r\in  \frac{1}{2}+\Z$ and $r\neq\frac{3}{2}$. Based on  Lemma  \ref{lem4.21452}, we have
\begin{eqnarray}\label{4.635}
&& (\frac{3}{4}-\frac{1}{2}r)G_r\mathbf{1}_{\bar0}
=[L_{r-\frac{1}{2}},G_\frac{1}{2}]\mathbf{1}_{\bar0}
\nonumber\\&=&L_{r-\frac{1}{2}}G_\frac{1}{2}\mathbf{1}_{\bar0}-G_\frac{1}{2}L_{r-\frac{1}{2}}\mathbf{1}_{\bar0}
\nonumber\\&=&L_{r-\frac{1}{2}}\mathbf{1}_{\bar1}
-G_\frac{1}{2}\Big(\lambda^{r-\frac{1}{2}}(s+h_{r-\frac{1}{2}}(t))\Big)\mathbf{1}_{\bar0}
\nonumber \\&=&\lambda^{r-\frac{1}{2}}\Big(g_{r-\frac{1}{2}}(y)+\frac{1}{2}-h_{r-\frac{1}{2}}(y)\Big)\mathbf{1}_{\bar1}
\end{eqnarray}
and \begin{eqnarray}\label{4.636}
\nonumber&&(\frac{3}{4}-\frac{1}{2}r)G_r\mathbf{1}_{\bar1}
=[L_{r-\frac{1}{2}},G_\frac{1}{2}]\mathbf{1}_{\bar1}
\nonumber\\&=&L_{r-\frac{1}{2}}G_\frac{1}{2}\mathbf{1}_{\bar1}-G_\frac{1}{2}L_{r-\frac{1}{2}}\mathbf{1}_{\bar1}
\nonumber\\&=&L_{r-\frac{1}{2}}\Big(\lambda(t-\alpha)\Big)\mathbf{1}_{\bar0}
-G_\frac{1}{2}\Big(\lambda^{r-\frac{1}{2}}(x+g_{r-\frac{1}{2}}(y))\Big)\mathbf{1}_{\bar1}
\nonumber\\&=&\lambda^{r+\frac{1}{2}}\Big(\big(h_{r-\frac{1}{2}}(t)-g_{r-\frac{1}{2}}(t)+\frac{1}{2}\big)(t-\alpha)
-(r-\frac{1}{2})\big(t-(r-\frac{1}{2})\alpha\big)\Big)\mathbf{1}_{\bar0}.
\end{eqnarray}
By \eqref{4.635} and  \eqref{4.636}, we check
\begin{eqnarray*}
&&\lambda^{2r}\Big(\big(g_{r-\frac{1}{2}}(t)+\frac{1}{2}-h_{r-\frac{1}{2}}(t)\big)\big((h_{r-\frac{1}{2}}(t)
-g_{r-\frac{1}{2}}(t)+\frac{1}{2}\big)(t-\alpha)
\\&&-(r-\frac{1}{2})\big(t-(r-\frac{1}{2})\alpha\big)\big)\Big)\mathbf{1}_{\bar0}
\\&=&(\frac{3}{4}-\frac{1}{2}r)^2G_r^2\mathbf{1}_{\bar0}
=(\frac{3}{4}-\frac{1}{2}r)^2W_{2r}\mathbf{1}_{\bar0}
\\&=&\lambda^{2r}(\frac{3}{4}-\frac{1}{2}r)^2(t-2r\alpha)\mathbf{1}_{\bar0},
\end{eqnarray*}
which implies
 \begin{eqnarray}\label{4.8835}
\nonumber&&\big(g_{r-\frac{1}{2}}(t)+\frac{1}{2}-h_{r-\frac{1}{2}}(t)\big)\big((h_{r-\frac{1}{2}}(t)
-g_{r-\frac{1}{2}}(t)+\frac{1}{2}\big)(t-\alpha)
\nonumber\\&&~~~~~~~~~~~~~~~~~~~~~~~-(r-\frac{1}{2})\big(t-(r-\frac{1}{2})\alpha\big)\big)
=(\frac{3}{4}-\frac{1}{2}r)^2(t-2r\alpha).
\end{eqnarray}
Set  $\tau_r(t):=g_{r-\frac{1}{2}}(t)-h_{r-\frac{1}{2}}(t)$.  We rewrite  \eqref{4.8835} as
    \begin{eqnarray*}
(\tau_r(t)+\frac{1}{2})\Big((
\frac{1}{2}-\tau_r(t))(t-\alpha)
-(r-\frac{1}{2})(t-(r-\frac{1}{2})\alpha)\Big)
=(\frac{3}{4}-\frac{1}{2}r)^2(t-2r\alpha).
\end{eqnarray*}
Comparing  the coefficients of $t$ in above equation, we immediately obtain
$$(\tau_r(t)+\frac{1}{2})^2-\frac{3-2r}{2}(\tau_r(t)+\frac{1}{2})+(\frac{3-2r}{4})^2=0.$$
Thus $\tau_r(t)+\frac{1}{2}=\frac{3-2r}{4}$
 for $r\neq\frac{3}{2}$.
  From $[L_{1},G_\frac{1}{2}]\mathbf{1}_{\bar0}=0$, it is easy to get   $g_{1}(y)-h_{1}(y)=-\frac{1}{2}$. Then  $g_{m}(y)-h_{m}(y)=-\frac{m}{2}$ for $m\in\Z$.

 (ii)
 Using (i) in \eqref{4.635}, we obtain $G_r\mathbf{1}_{\bar 0}=\lambda^{r-\frac{1}{2}}\mathbf{1}_{\bar 1}$
 for any $r\in \frac{1}{2}+\Z$ and $r\neq\frac{3}{2}$.
Moreover, we check that
\begin{eqnarray*}
-\frac{3}{2}G_{\frac{3}{2}}\mathbf{1}_{\bar0}
=[L_2,G_{-\frac{1}{2}}]\mathbf{1}_{\bar0}
=L_2G_{-\frac{1}{2}}\mathbf{1}_{\bar0}-G_{-\frac{1}{2}}L_2\mathbf{1}_{\bar0}
=-\frac{3}{2}\lambda \mathbf{1}_{\bar1}.
\end{eqnarray*}
Hence, part (ii) also holds for $r=\frac{3}{2}$.

 By the similar discussion as  (ii),
 the (iii) clears.
\end{proof}
 \begin{lemm}\label{lem4.33}
For any $r \in \frac{1}{2}+\Z, h(t)\in\C[t],f(t,s)\in\C[t,s], g(y)\in\C[y],f(y,x)\in\C[y,x], $ we have
$$G_rf(t,s)\mathbf{1}_{\bar 0}=\lambda^{r-\frac{1}{2}}f(y,x-r)\mathbf{1}_{\bar 1},
\  G_rf(y,x)\mathbf{1}_{\bar 1}=\lambda^{r+\frac{1}{2}}(t-2r\alpha)f(t,s-r)\mathbf{1}_{\bar 0}.$$
\end{lemm}
\begin{proof}
According to $G_{r}L_0=(L_0-r)G_{r}$
and Lemma \ref{lem43334.33}, we show that
\begin{eqnarray*}
&&G_rf(t,s)\mathbf{1}_{\bar 0}= G_rf(W_0,L_0)\mathbf{1}_{\bar 1}
= f(W_0,L_0-r)G_r\mathbf{1}_{\bar 0}= \lambda^{r-\frac{1}{2}}f(y,x-r)\mathbf{1}_{\bar 1},
\\&&G_rf(y,x)\mathbf{1}_{\bar 1}= G_\frac{1}{2}f(W_0,L_0)\mathbf{1}_{\bar 1}
= f(W_0,L_0-r)G_r\mathbf{1}_{\bar 1}= \lambda^{r+\frac{1}{2}}(t-\alpha)f(t,s-r)\mathbf{1}_{\bar 0}.
\end{eqnarray*}
  The lemma holds.

\end{proof}
We present the main result of this section, which gives a
complete classification of free $U(\mathfrak{T})$-modules of rank $2$ over $\S^{\frac{1}{2}}$.

From \eqref{eqq4.1},  \eqref{eqq4.2},  \eqref{eqq4.3},  \eqref{eqq4.4} and Lemmas \ref{lem4.21452}, \ref{lem4.33},  we immediately get the following results.
\begin{theo}\label{th4.3}
 Assume that $M$ is an  $\S^{\frac{1}{2}}$-module such that the restriction of $M$ as a $U(\mathfrak{T})$-module is free of rank $2$. Up to a
parity-change,  we have $M\cong\Omega_{\S^{\frac{1}{2}}}(\lambda,\alpha,h)$ for $\lambda\in\C^*,\alpha\in\C,h(t)\in\C[t]$
with the $\S^{\frac{1}{2}}$-module structure defined as
in \eqref{3.22}-\eqref{3.88}.
\end{theo}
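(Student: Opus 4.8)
The plan is to assemble the structural data already extracted in Lemmas \ref{lem4.21452} and \ref{lem4.33} into an explicit isomorphism, so that the theorem reduces to checking that every generator acts on $M$ by exactly the formulas \eqref{3.22}--\eqref{3.88} defining $\Omega_{\S^{\frac{1}{2}}}(\lambda,\alpha,h)$. First I would recall the setup preceding the lemmas: the two basis vectors must have opposite parity, whence $M_{\bar 0}=\C[t,s]\mathbf{1}_{\bar 0}$ and $M_{\bar 1}=\C[y,x]\mathbf{1}_{\bar 1}$, and the restriction of each graded piece to $\S_{\bar 0}\cong\mathcal{W}$ is a free $\mathcal{W}$-module of rank $1$. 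By Lemma \ref{le2.1} this fixes the even action in the form \eqref{eqq4.1}--\eqref{eqq4.4}, introducing a priori independent data $\lambda,\mu\in\C^*$, $\alpha,\beta\in\C$, $h(t)\in\C[t]$ and $g(y)\in\C[y]$.

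The next step is to collapse these parameters by exploiting the odd generators. Lemma \ref{lem4.21452} forces $\mu=\lambda$ and $\beta=\alpha$, and, after applying the parity-change functor $\Pi$ if necessary (this is the source of the ``up to a parity'' clause, corresponding to the two cases of that lemma) and rescaling $\mathbf{1}_{\bar 1}$, normalizes $G_{\frac{1}{2}}\mathbf{1}_{\bar 0}=\mathbf{1}_{\bar 1}$, from which $G_{\frac{1}{2}}\mathbf{1}_{\bar 1}=G_{\frac{1}{2}}^2\mathbf{1}_{\bar 0}=W_1\mathbf{1}_{\bar 0}=\lambda(t-\alpha)\mathbf{1}_{\bar 0}$ follows automatically. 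Then Lemma \ref{lem4.33}(i) gives $g_m(y)=h_m(y)-\frac{m}{2}$, so that the $L_m$-action \eqref{eqq4.3} on $M_{\bar 1}$ rewrites as $\lambda^m(x-\frac{m}{2}+h_m(y))f(y,x-m)-m\lambda^m(y-m\alpha)\frac{\partial}{\partial y}f(y,x-m)$, which is precisely the second line of \eqref{3.22}. Parts (ii) and (iii) of Lemma \ref{lem4.33} then supply the complete odd action $G_rf(t,s)\mathbf{1}_{\bar 0}=\lambda^{r-\frac{1}{2}}f(y,x-r)\mathbf{1}_{\bar 1}$ and $G_rf(y,x)\mathbf{1}_{\bar 1}=\lambda^{r+\frac{1}{2}}(t-2r\alpha)f(t,s-r)\mathbf{1}_{\bar 0}$ for all $r\in\frac{1}{2}+\Z$, matching the $G_r$-formulas in the definition of $\Omega_{\S^{\frac{1}{2}}}(\lambda,\alpha,h)$.

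With every action formula identified, I would define the $\Z_2$-graded linear bijection $\Psi\colon M\to\Omega_{\S^{\frac{1}{2}}}(\lambda,\alpha,h)$ by $f(t,s)\mathbf{1}_{\bar 0}\mapsto f(t,s)$ and $f(y,x)\mathbf{1}_{\bar 1}\mapsto f(y,x)$, and verify that $\Psi$ intertwines the action of each of $L_m$, $W_m$, $G_r$ and the central elements $C_1,C_2$. Since the computations of the previous paragraph establish that these generators act on $M$ by exactly the formulas \eqref{3.22}--\eqref{3.88} used to define $\Omega_{\S^{\frac{1}{2}}}(\lambda,\alpha,h)$, this final check is immediate, yielding $M\cong\Omega_{\S^{\frac{1}{2}}}(\lambda,\alpha,h)$ up to parity.

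I do not expect a genuine obstacle at the level of the theorem, because the delicate work---commuting $G_{\frac{1}{2}}$ past polynomials in $W_0,L_0$ and running the recursion in $r$ through $[L_{r-\frac12},G_{\frac12}]$---has already been carried out in Lemma \ref{lem4.33}. The one point demanding care is the bookkeeping that aligns the classification data with the definition: one must confirm that the normalization $\mu=\lambda$, $\beta=\alpha$ together with the shift $g_m=h_m-\frac{m}{2}$ reproduces the exact coefficient $x-\frac{m}{2}+h_m(y)$ in \eqref{3.22}, and that the single parity ambiguity of Lemma \ref{lem4.21452} is precisely the $\Pi$-twist absorbed by the phrase ``up to a parity''.
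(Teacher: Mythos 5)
Your proposal is correct and follows essentially the same route as the paper, whose own proof of Theorem \ref{th4.3} is just a one-sentence assembly of \eqref{eqq4.1}--\eqref{eqq4.4} with Lemmas \ref{lem4.21452} and \ref{lem4.33}; you simply make explicit the normalization (the $\Pi$-twist and rescaling of $\mathbf{1}_{\bar 1}$ from the two cases of Lemma \ref{lem4.21452}) and the substitution $g_m=h_m-\frac{m}{2}$ that the paper leaves implicit. The details you fill in, including the identity map on polynomials as the isomorphism, are exactly what the paper intends.
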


Based on Proposition \ref{prop2.6}, Theorem  \ref{th3.5} and Theorem \ref{th4.3}, we obtain the  corollary as follows.
\begin{coro} The category of free $U(\mathfrak{t})$-modules of rank $1$ over  $\S^0$ is
equivalent to the category of free $U(\mathfrak{T})$-modules of rank $2$ over
$\S^{\frac{1}{2}}$.
\end{coro}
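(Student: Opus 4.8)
Write $\mathcal{C}_1$ for the category of free $U(\mathfrak{h})$-modules of rank $1$ over $\S^0$ and $\mathcal{C}_2$ for the category of free $U(\mathfrak{H})$-modules of rank $2$ over $\S^{\frac12}$. The plan is to assemble the equivalence from the two classification theorems: by Theorem \ref{th3.5} together with Theorem \ref{th2.4}, and by Theorem \ref{th4.3} together with the corresponding isomorphism criterion for $\S^{\frac12}$, both categories have skeleton indexed (up to parity) by the same data $\C^*\times\C\times\C[t]$, so a bijection of isomorphism classes is automatic; the work is to realize this bijection by a functor that is fully faithful and essentially surjective. The object-level dictionary is supplied by restriction along the embedding $\sigma:\S^{\frac12}\hookrightarrow\S^0$ and by Proposition \ref{prop2.7}.

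First I would record that restriction along $\sigma$ sends $\mathcal{C}_1$ into $\mathcal{C}_2$: if $V$ is free of rank $1$ over $U(\mathfrak{h})=\C[L_0,W_0]\oplus G_0\C[L_0,W_0]$ with $G_0^2=W_0$, then $\sigma$ identifies $U(\mathfrak{H})=\C[L_0,W_0]$ with the even-index piece of $\S^0$, so $\{\mathbf{1},G_0\mathbf{1}\}$ becomes a homogeneous $U(\mathfrak{H})$-basis of opposite parities and the restriction is free of rank $2$. Essential surjectivity follows: given $M\in\mathcal{C}_2$, Theorem \ref{th4.3} yields $M\cong\Omega_{\S^{\frac12}}(\lambda,\alpha,h)$, and Proposition \ref{prop2.7} realizes this as the $\sigma$-restriction of $\Omega_{\S^0}(\sqrt\lambda,\alpha,\tilde h)\in\mathcal{C}_1$, where $\tilde h$ is determined by $h$ through $h_m(\tfrac12 t)=\tfrac12\tilde h_{2m}(t)$. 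Faithfulness needs no argument, since every $\S^0$-map is in particular $\S^{\frac12}$-linear.

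The main obstacle is fullness, and it is delicate precisely because $\sigma$ reaches only $\{L_m,W_m\mid m\in2\Z\}\cup\{G_r\mid r\in2\Z+1\}$, so an $\S^{\frac12}$-linear map of restrictions need not commute with the remaining generators $\{L_m,W_m\mid m\ \mathrm{odd}\}\cup\{G_m\mid m\ \mathrm{even}\}$ of $\S^0$. For instance the involution acting as $+1$ on the even part and $-1$ on the odd part intertwines the restrictions of $\Omega_{\S^0}(\lambda,\alpha,h)$ and $\Omega_{\S^0}(-\lambda,\alpha,h)$, although these are non-isomorphic over $\S^0$ by Theorem \ref{th2.4}; this is exactly the $\lambda\mapsto\lambda^2$ ambiguity visible in Proposition \ref{prop2.7}. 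The way around it is not to take naive restriction as the functor, but to fix the parameter-preserving bijection of the two skeleta provided by the classifications and then verify that the morphism spaces match compatibly with composition. For the simple objects ($\alpha\neq0$), Proposition \ref{prop2.6} together with a Schur-type argument collapses every endomorphism ring to $\C$ and every cross Hom-space to $0$ off the diagonal, so the comparison is immediate there.

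The crux is then the non-simple case $\alpha=0$, where there are non-scalar morphisms and the module structure is governed by the submodule chains $\pi_i=(t^2)^i\C[t^2,s]\oplus t\C[t^2,s]$ and the quotients $\bar\pi_i$ appearing in the analysis of Theorem \ref{th2.3}. Here I would compute the full morphism spaces and the submodule lattices of $\Omega_{\S^0}(\lambda,0,h)$ and of its $\S^{\frac12}$-partner, match these invariants under the parameter-preserving bijection, and thereby upgrade the bijection of objects to a genuine equivalence of categories.
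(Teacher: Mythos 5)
Your proposal is more careful than the paper itself: the paper's entire proof of this corollary is the one-line citation of Proposition \ref{prop2.6}, Theorem \ref{th3.5} and Theorem \ref{th4.3} --- i.e.\ it matches the two classifications parameter-for-parameter, with restriction along $\sigma$ (Propositions \ref{prop2.6}, \ref{prop2.7}) as the object-level dictionary, and it never examines morphism spaces at all. Your observation that naive restriction along $\sigma$ is \emph{not} full --- the parity involution intertwines the restrictions of $\Omega_{\S^{0}}(\lambda,\alpha,h)$ and $\Omega_{\S^{0}}(-\lambda,\alpha,h)$, which are non-isomorphic over $\S^{0}$ by Theorem \ref{th2.4} --- is correct, and it is a real subtlety that the paper silently ignores; your pivot from the restriction functor to a parameter-preserving matching of skeleta is the right repair.

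The gap is that at the decisive point you only announce the computation rather than perform it: for $\alpha=0$ you say you ``would compute the full morphism spaces and the submodule lattices,'' and without that the equivalence is not established. Moreover, your expectation of non-scalar morphisms there is mistaken, and the lattice route is the wrong tool. Any homomorphism between objects of either category is in particular $U(\mathfrak{h})$- (resp.\ $U(\mathfrak{H})$-) linear, hence acts on the free generator(s) by multiplication by polynomials $p$; $W_m$-linearity forces $p$ independent of $s$ (and forces $\lambda=\mu$, $\alpha=\beta$ for a nonzero cross-Hom), and comparing the term $-m\lambda^{m}(t^{2}-m\alpha)\frac{\partial}{\partial t^{2}}$ in $L_m$-linearity then forces $p$ constant and $h=g$; the analogous computation with an odd value $\theta(\mathbf{1})=tq(t^{2})\mathbf{1}$ leads to $-\frac{m}{2}q=m(t^{2}-m\alpha)q'$ for all $m$, which has no nonzero polynomial solution, so $\mathrm{Hom}(\Omega,\Pi\Omega)=0$. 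Thus every Hom space in either category is $0$ or $\C\,\mathrm{id}$ \emph{regardless} of whether $\alpha=0$: freeness over the Cartan subalgebra rigidifies the morphisms even when the modules are non-simple, and the submodules of $\Omega_{\S^{0}}(\lambda,0,h)$ never arise as images of endomorphisms (multiplication by $(t^{2})^{i}$ is not $L_m$-equivariant). Incidentally, the chain you invoke is also shakier than the paper suggests: for $i\ge 2$ one has $G_m\,tf=\lambda^{m}t^{2}f(t^{2},s-m)\notin (t^{2})^{i}\C[t^{2},s]$, so only $\pi_0\supset\pi_1$ are actually submodules. With the Hom computation recorded, compatibility with composition is trivial and the equivalence follows from the bijection of parameters; as written, your proposal stops just short of a proof at exactly the point where the paper also says nothing.
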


\begin{remark} We note that     these modules are new classes of simple
modules over $\S^{\epsilon}$.
\end{remark}

\section*{Data availability}
The data that support the findings of this study are available from the corresponding author
upon reasonable request.

\section*{Acknowledgements}
This work was supported by the National Natural Science Foundation of China (No.12171129,  11971350),  and the Anhui Provincial  Natural Science Research Program for Institutions of Higher Learning (NO. KJ2018A0976).

\small 

{\scriptsize     Department of Mathematics, Jimei University, Xiamen, Fujian 361021, China    

{\it Email-address:} {\bf hypo1025@163.com}

\bigskip
School of Mathematical Sciences, Guizhou Normal University, Guiyang 550001, China

{\it Email-address:} {\bf daisheng158@126.com}

\bigskip
 School
of Statistics and Mathematics, Shanghai Lixin University of Accounting and Finance, Shanghai 201209, China

{\it Email-address:} {\bf lymaths@126.com}

\bigskip

  1. Department of Mathematics, Jimei University, Xiamen, Fujian 361021, China   2. School
of Mathematical Sciences, Tongji University, Shanghai 200092

{\it Email-address:} {\bf ycsu@tongji.edu.cn}
\bigskip
\end{document}